\newtheorem{theorem}{Theorem}[section]
\newtheorem{lemma}{Lemma}[section]
\newcommand{\R}{{\mathbb R}}
\newcommand{\C}{{\mathbb C}}
\newcommand{\Om}{\Omega}
\newcommand{\DOm}{\partial\Omega}
\newcommand{\dbar}{\bar{\partial}}
\newcommand{\dd}{\partial}
\DeclareMathOperator{\de}{\partial}
\DeclareMathOperator{\dez}{\de_z}
\DeclareMathOperator{\dbarz}{\dbar_z}
\title{A direct D-bar reconstruction algorithm for recovering a complex conductivity in 2-D}
\begin{document}

\author{S. J. Hamilton\thanks{Department of Mathematics, Colorado State University, USA}, C. N. L. Herrera\thanks{Department of Mechanical Engineering, University of S\~ao Paulo, Brazil}, J. L. Mueller\thanks{Department of Mathematics and School of Biomedical Engineering, Colorado State University, USA}, and A. Von Herrmann\thanks{Department of Mathematics and Statistics, Colby College, USA}}

\date{August 1, 2012}
 \maketitle

\begin{abstract}
\noindent A direct reconstruction algorithm for complex conductivities in $W^{2,\infty}(\Om)$, where $\Om$ is a bounded, simply connected Lipschitz domain in $\R^2$, is presented.  The framework is based on the uniqueness proof by Francini [Inverse Problems  {\bf 20} 2000], but equations relating the Dirichlet-to-Neumann to the scattering transform and the exponentially growing solutions are not present in that work, and are derived here.   The algorithm constitutes the first D-bar method for the reconstruction of conductivities and permittivities in two dimensions.  Reconstructions of numerically simulated chest phantoms with discontinuities at the organ boundaries are included.
\end{abstract}


\section{Introduction}

The reconstruction of admittivies $\gamma$ from electrical boundary measurements is known as the {\em inverse admittivity problem}.  The unknown admittivity appears as a complex coefficient $\gamma(z) = \sigma(z) +i\omega\epsilon(z)$  in the generalized Laplace equation
\begin{equation} \label{genLap}
\nabla\cdot(\gamma(z)\nabla u(z) ) = 0, \quad z\in\Omega, \qquad u|_{\partial\Omega} = f,
\end{equation}
where $u$ is the electric potential, $\sigma$ is the conductivity of the medium, $\epsilon$ is the permittivity, and $\omega$ is the temporal angular frequency of the applied electromagnetic wave.  The data is the Dirichlet-to-Neumann, or voltage-to-current density  map  defined by
\begin{equation} \label{DNmap}
  \Lambda_{\gamma}u = \left.\gamma\frac{\partial u}{\partial\nu}\right|_{\DOm},
\end{equation}
where $u\in H^1(\Om)$ is the solution to \eqref{genLap}. By the trace theorem
$\Lambda_\gamma:  H^{1/2}(\DOm)\longrightarrow H^{-1/2}(\DOm)$.

In this work we present a direct reconstruction algorithm for the admittivity~$\gamma$.  The majority of the theory is based on the 2000 paper by Francini \cite{francini} in which it is established that if $\sigma,\; \epsilon \in W^{2,\infty}(\Omega)$, where $\Omega$ is a bounded domain in $\R^2$ with Lipschitz boundary, then the real-valued functions $\sigma$ and $\epsilon$  are uniquely determined by the Dirichlet-to-Neumann map, provided that the imaginary part of the admittivity is sufficiently small.   The proof in \cite{francini} is based on the D-bar method and is nearly constructive, but equations linking the scattering transform and the exponentially growing solutions to the Dirichlet-to-Neumann data are not used in the proof, and so it does not contain a complete set of equations for reconstructing the admittivity.  In this work, we derive the necessary equations for a direct, nonlinear reconstruction algorithm for the admittivity~$\gamma$.  Furthermore, we establish the existence of exponentially growing solutions to \eqref{genLap}, which prove to be useful in relating the Dirichlet-to-Neumann data to the scattering transform.  The reconstruction formula in \cite{francini} is for the potential~$Q_\gamma$, whose relationship to~$\gamma$ is described below.  We provide a direct formula for~$\gamma$ from the D-bar equations in \cite{francini}, which is computationally advantageous as well.

The inverse admittivity problem has an important application known as electrical impedance tomography (EIT).  The fact that the electrical conductivity and permittivity vary in the different tissues and organs in the body allows one to form an image from the reconstructed admittivity distribution.  In the 2-D geometry, EIT is clinically useful for chest imaging.  Conductivity images have been used for monitoring pulmonary perfusion \cite{BrownBarber, frerichs_perfusion,smit2}, determining  regional ventilation in the lungs \cite{frerichs2007, frerichs2002, victorino}, and the detection of pneumothorax \cite{costa}, for example.  In three dimensions, conductivity images have been used, for instance, in head imaging \cite{TGBH2001,TGBH2001Jutta} and knowledge of the admittivity has been applied to breast cancer detection \cite{boverman, kao,kerner}.

Reconstruction algorithms based on a least-squares approach that reconstruct permittivity include \cite{edic, boverman,jain}. The aforementioned algorithms are iterative, whereas the work presented here is a direct method that makes use of exponentially growing solutions, or complex geometrical optics ({\sc CGO}) solutions, to the admittivity equation.  The steps of the algorithm are to compute these {\sc CGO} solutions from knowledge of the Dirichlet-to-Neumann map, to compute a scattering transform matrix, to solve two systems of $\dbar$ (D-bar) equations in the complex frequency variable $k$ for the {\sc CGO} solutions to a related elliptic system, and finally to reconstruct the admittivity from the values of these solutions at $k=0$.  In this work, we provide a complete implementation of this algorithm and present reconstructions of several numerical phantoms relevant to medical EIT imaging.  The phantoms we consider here are discontinuous at the organ boundaries, which is actually outside the theory of the algorithm.  The work \cite{sarah_thesis} contains computations of smooth admittivities and validates our formulas and computations by comparing the results of the intermediate functions ({\sc CGO} solutions and scattering transforms) with those computed from knowledge of the admittivity.

We briefly review the history of results using {\sc CGO} solutions on the inverse conductivity problem in dimension 2.  The inverse conductivity problem was first introduced by A.P. Calder{\'o}n \cite{calderon} in 1980, where he proved that, in a linearized version of the problem, the Dirichlet-to-Neumann map uniquely determines the conductivity, and he proposed a direct reconstruction method for this case. An implementation in dimension two for experimental data is found in \cite{bikowski}. In 1996, Nachman \cite{nachman} presented a constructive proof of global uniqueness for twice differentiable conductivities using D-bar methods.  The D-bar algorithm following from \cite{nachman,siltanen} has been applied to simulated data in \cite{properties,siam,tmireview,AIP} and to experimental data on tanks and {\em in vivo} human data in \cite{TMIdata,ChestPaper,murphy1,deangelo}.  While the initial scattering transform was regularized using a Born approximation, a more recent paper \cite{dbar_regul} contains a full nonlinear regularization analysis, including estimates on speed of convergence in Banach spaces, for twice differentiable conductivities.   The regularity conditions on the conductivity were relaxed to once-differentiable in  \cite{brownuhlmann}.  The proof uses D-bar techniques and formulates the problem as a first-order elliptic system.  A reconstruction method based on \cite{brownuhlmann} can be found in \cite{knudsen,knudsenMummy,knudsenTamasan}.  Francini \cite{francini} provided a proof of unique identifiability for the inverse admittivity problem for $\sigma,\; \epsilon
 \in W^{2,\infty}(\Omega)$, with $\omega$ small. Her work provides a nearly constructive proof based on D-bar methods on a first-order elliptic system similar to that in \cite{brownuhlmann}.  A non-constructive proof that applies to complex admittivities with no smallness assumption is found in \cite{Bukhgeim2007}.  Astala and P\"aiv\"arinta provide a {\sc CGO}-based constructive proof for real conductivities $\sigma \in L^{\infty }(\Omega ),$ and numerical results related to this work can be found in \cite{APpaper1,APpaper2}.

The paper is organized as follows.  In Section \ref{sec:theory} we describe the direct reconstruction algorithm, which is comprised of boundary integral equations for the exponentially growing solutions to \eqref{genLap} involving the Dirichlet-to-Neumann data, boundary integral equations relating those {\sc CGO} solutions and the {\sc CGO} solutions $\Psi$ of the first order system, equations for the scattering transform involving only the traces of $\Psi$, the $\bar{\partial}_k$ equations established in \cite{francini}, and the direct reconstruction formula for $Q_\gamma$ and thus $\gamma$. Derivations of the novel equations are found in this section.  Section \ref{sec:numerics} describes the numerical implementation of the algorithm.  Results on noisy and non-noisy simulated data of a cross-sectional chest with discontinuous organ boundaries are found in Section \ref{sec:results}.

\section{The Direct Reconstruction Algorithm} \label{sec:theory}

In this section we will provide the equations for the direct reconstruction algorithm, completing the steps for the proof in \cite{francini} to be completely constructive.  In particular, boundary integral equations relating the {\sc CGO} solutions to the Dirichlet-to-Neumann (DN) map are derived.

Let $\Omega\subset\R^2$ be a bounded open domain with a Lipschitz boundary. Throughout we assume that there exist positive constants $\sigma_0$ and $\beta$ such that \begin{equation}\label{eq-fran-1-5}
\sigma(z)>\sigma_0,\quad z\in\Omega\subset\R^2
\end{equation}
and
\begin{equation}\label{eq-fran-1-7}
\|\sigma\|_{W^{1,\infty}(\Omega)},\;\|\epsilon\|_{W^{1,\infty}(\Omega)}\leq \beta.
\end{equation}
We extend $\sigma$ and $\epsilon$ from $\Omega$ to all of $\R^2$ such that $\sigma\equiv1$ and $\epsilon\equiv0$ outside a ball with fixed radius that contains $\Omega$, and \eqref{eq-fran-1-5} and \eqref{eq-fran-1-7} hold for all of $\R^2$.  In fact, all that is required is that $\gamma$ is constant outside that ball of fixed radius; for convenience we look at the case where $\gamma\equiv1$.

The proof in \cite{francini} closely follows that of \cite{brownuhlmann} for conductivities $\sigma \in W^{1,p}(\Omega)$,  $p>2$.   The matrix potential $Q_{\gamma}$ is, however, defined slightly differently, and since the potential in \cite{francini} is not Hermitian, the approach in \cite{francini} is to consider the complex case as a perturbation from the real case provided the imaginary part of $\gamma$ is small.  Define $Q_{\gamma}(z)$ and a matrix operator D by
\begin{equation}
\label{matrixim}
\begin{split}
Q_{\gamma}(z) = \begin{pmatrix} 0 & -\frac{1}{2}\partial_z\log\,\gamma(z) \\{-\frac{1}{2}\bar{\partial}_z\log \,\gamma(z)} & 0\end{pmatrix},
\qquad D = \begin{pmatrix} \bar\partial_z &  0\\0 & \partial_z\end{pmatrix}.
\end{split}
\end{equation}
Thus we define
\begin{equation}\label{Q_log}
Q_{12}(z)=-\frac{1}{2}\partial_z\log\,\gamma(z) \quad \mbox{and} \quad Q_{21}(z)=-\frac{1}{2}\dbar_z \log\gamma(z),
\end{equation}
and equivalently we can write
\begin{equation}\label{q_frac}
Q_{12}(z)=-\frac{\partial_z \gamma^{1/2}(z)}{\gamma^{1/2}(z)} \quad \mbox{and} \quad Q_{21}(z)=-\frac{\bar{\partial}_z \gamma^{1/2}(z)}{\gamma^{1/2}(z)},
\end{equation}
or
\begin{equation}\label{q_frac2}
Q_{12}(z)=-\frac{1}{2}\gamma^{-1}(z)\partial \gamma(z) \quad \mbox{and} \quad Q_{21}(z)=-\frac{1}{2}\gamma^{-1}(z)\bar {\partial} \gamma(z).
\end{equation}
Defining a vector
\begin{equation}\label{vw}
\begin{pmatrix} {v}\\{w}\end{pmatrix}=\gamma^{1/2}\begin{pmatrix} \partial {u}\\\bar\partial{u} \end{pmatrix},
\end{equation}
in terms of the solution $u$ to \eqref{genLap}, one sees that
\begin{equation*}
D\begin{pmatrix} {v}\\{w}\end{pmatrix} -Q_{\gamma}\begin{pmatrix} {v}\\{w}\end{pmatrix}=0.
\end{equation*}

The uniqueness result in \cite{francini} is
\begin{theorem} (Theorem 1.1 \cite{francini})  Let $\Om$ be an open bounded domain in $\R^2$ with Lipschitz boundary.  Let $\sigma_j$ and $\epsilon_j$, for $j=1,2$ satisfy assumptions \eqref{eq-fran-1-5} and $\|\sigma\|_{W^{2,\infty}(\Om)},\;\|\epsilon\|_{W^{2,\infty}(\Om)}\leq \beta$.  There exists a constant $\omega_0=\omega_0(\beta,\sigma_0,\Om)$ such that if $\gamma_j=\sigma_j + i\omega\epsilon_j$ for $j=1,2$ and $\omega<\omega_0$ and if
$$\Lambda_{\gamma_1} = \Lambda_{\gamma_2},$$
then
$$\sigma_1=\sigma_2 \quad \mbox{and} \quad \epsilon_1=\epsilon_2.$$
\end{theorem}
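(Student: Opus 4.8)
The plan is to adapt the $\dbar$ (D-bar) method of \cite{brownuhlmann} to the non-Hermitian first-order system $D\Psi-Q_\gamma\Psi=0$ associated with \eqref{genLap} via \eqref{vw}, treating the imaginary part of $\gamma$ as a perturbation of the real-conductivity case; the smallness hypothesis $\omega<\omega_0$ is precisely what makes these perturbation arguments close. Since by \eqref{q_frac} knowledge of $Q_\gamma$ gives $\dd_z\log\gamma^{1/2}$ and $\dbar_z\log\gamma^{1/2}$, hence $\log\gamma^{1/2}$ on all of $\R^2$ by applying a Cauchy transform and using the normalization $\gamma\equiv1$ (so $\log\gamma^{1/2}\equiv0$) off a fixed ball, and since $\gamma$ then determines $\sigma=\operatorname{Re}\gamma$ and $\epsilon=\omega^{-1}\operatorname{Im}\gamma$, it suffices to show that $\Lambda_\gamma$ uniquely determines $Q_\gamma$.

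First I would establish existence and uniqueness of the exponentially growing matrix solutions $\Psi(z,k)$ of $D\Psi=Q_\gamma\Psi$, normalized so that $\Psi$ times the appropriate diagonal exponential weight tends to the identity as $|z|\to\infty$. Rewriting this as an integral equation using the solid Cauchy transforms inverting $\dbar_z$ and $\dd_z$, existence reduces to invertibility, on a space such as $L^p(\R^2)\cap L^\infty(\R^2)$ with $p>2$, of an operator $I-\mathcal G_k$ in which $\mathcal G_k$ is compact and built from $Q_\gamma$ and the exponential weights. In the self-adjoint case this is handled by the Fredholm alternative together with a uniqueness lemma exploiting the antisymmetric structure of $Q$; to cover complex $\gamma$ I would split $Q_\gamma=Q_\sigma+\omega\widetilde Q$ with $\|\widetilde Q\|$ controlled by $\beta$, and deduce invertibility of $I-\mathcal G_k$ for every $k\in\C$ once $\omega<\omega_0(\beta,\sigma_0,\Om)$ by a Neumann series around the invertible real-case operator. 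I expect this to be the crux: the perturbation estimate must be uniform in $k$ --- in particular it must not degenerate as $|k|\to\infty$ --- and uniform over the admissible class, so that $\omega_0$ depends only on $\beta,\sigma_0,\Om$; the extra $W^{2,\infty}$ regularity is used here and below to secure enough decay of the scattering data.

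Next I would show that the boundary traces $\Psi|_{\DOm}$, and with them the scattering transform, are determined by $\Lambda_\gamma$ alone. Via the substitution \eqref{vw} the question transfers to the exponentially growing solutions of the scalar equation \eqref{genLap}, which satisfy a boundary integral equation of Nachman type in which $\gamma$ enters only through $\Lambda_\gamma$ and a Faddeev-type Green's function; solvability of that equation (again for $\omega$ small) recovers the traces, and the scattering transform is an explicit integral over $\DOm$ of these traces against the exponential. Hence $\Lambda_{\gamma_1}=\Lambda_{\gamma_2}$ forces equality of the scattering transforms and of $\Psi_1|_{\DOm}$ and $\Psi_2|_{\DOm}$.

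Finally, I would invoke the $\dbar_k$ equations established in \cite{francini}: as a function of the spectral parameter $k$, $\Psi(z,\cdot)$ solves a D-bar system whose coefficient is the scattering transform, with $\Psi\to I$ as $k\to\infty$. For $\omega<\omega_0$ this D-bar problem has a unique solution (Fredholm plus a smallness-based uniqueness argument), so the scattering transform determines $\Psi(z,k)$ for all $z$ and $k$; reading $Q_\gamma$ off the large-$k$ asymptotic expansion of $\Psi$ then yields $Q_{\gamma_1}=Q_{\gamma_2}$. Together with the reduction in the first paragraph this gives $\gamma_1=\gamma_2$, hence $\sigma_1=\sigma_2$ and $\epsilon_1=\epsilon_2$.
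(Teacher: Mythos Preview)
This theorem is not proved in the paper; it is quoted verbatim as Theorem~1.1 of \cite{francini}, and the paper's own contribution is rather to supply the constructive pieces (boundary integral equations for the CGO solutions, formulas for the scattering transform from DN data, a direct reconstruction formula for $Q_\gamma$) that \cite{francini} does not contain. Your outline is broadly faithful to the structure of Francini's argument as the paper describes it --- an adaptation of the Brown--Uhlmann $\dbar$ scheme to the non-Hermitian system $D\Psi=Q_\gamma\Psi$, with the complex case treated as a perturbation of the real case for $\omega$ small, followed by recovery of $Q_\gamma$ from the $\dbar_k$ equation and its large-$k$ behavior.

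One point of divergence deserves comment. In your middle step you propose to show that $\Lambda_\gamma$ determines the boundary traces of the CGO solutions, and hence the scattering transform, via a Nachman-type boundary integral equation for scalar exponentially growing solutions of \eqref{genLap}. The paper is explicit that this is \emph{not} how Francini's uniqueness proof proceeds: it says that ``equations linking the scattering transform and the exponentially growing solutions to the Dirichlet-to-Neumann data are not used in the proof'' in \cite{francini}, and deriving precisely such equations is the new content of the present paper (Theorems~\ref{thm:uexp}--\ref{thm:u2_BIE} and Theorem~\ref{thm-Psi-BIE-Sarah}). Francini instead passes from $\Lambda_{\gamma_1}=\Lambda_{\gamma_2}$ to equality of scattering data by boundary determination of $\gamma$ and its derivatives (so the two admittivities can be extended identically outside $\Om$) together with an Alessandrini-type integral identity, rather than by solving a boundary integral equation. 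So your second step, while a legitimate route --- indeed the one this paper builds for reconstruction --- is not the argument of \cite{francini}; it also presupposes unique solvability of the boundary integral equation for complex $\gamma$, which the paper obtains only by passing through the Schr\"odinger CGO solutions $\psi_S$ and hence only for nonexceptional $k$.
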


\subsection{{\sc CGO} solutions}
Francini shows in \cite{francini} that for $\omega$ sufficiently small and $\gamma$ satisfying \eqref{eq-fran-1-5} and \eqref{eq-fran-1-7}  there exists a unique $2\times2$ matrix $M(z,k)$ for $k\in\C$ satisfying
\begin{equation} \label{Mcondition}
M(\cdot,k)-I \in L^p(\R^2), \quad \mbox{for some} \quad p>2,
\end{equation}
that is a solution to
\begin{equation} \label{Dbar_z_for_M}
(D_{k}-Q_{\gamma}(z))M(z,k) = 0,
\end{equation}
where $D_{k}$ is the matrix operator defined by
\[D_{k}M=DM-ik\begin{pmatrix} 1&  0 \\  0 & -1 \end{pmatrix}M^{\mbox{off}},\]
and ``off'' denotes the matrix consisting of only the off-diagonal entries of $M$.
The system \eqref{Dbar_z_for_M} is equivalent to the following set of equations, included for the reader's convenience
\begin{equation}\label{M_eqs}
\begin{array}{rclcrcl}
\dbar_z M_{11} - Q_{12}M_{21} &=&0&\qquad& (\dbar_z-ik)M_{12} - Q_{12}M_{22}&=&0\\
(\dd_z+ik)M_{21} - Q_{21}M_{11} &=&0&\qquad&\dd_z M_{22} - Q_{21}M_{12} &=&0.
\end{array}
\end{equation}
Thus, there exists a unique matrix $\Psi(z,k)$ defined by
\begin{equation}
\label{matrix21}
\Psi (z,k)=M(z,k)\begin{pmatrix} e^{izk} & 0\\0 & e^{-i\bar{z}k}\end{pmatrix}=\begin{pmatrix} e^{izk}M_{11}(z,k) & e^{-i\bar{z}k}M_{12}(z,k) \\ e^{izk}M_{21}(z,k) & e^{-i\bar{z}k}M_{22}(z,k) \end{pmatrix},
\end{equation}
that is a solution to
\begin{equation}
\label{matrix20}
\begin{pmatrix} {D-Q_{\gamma}}\end{pmatrix}\Psi=0,
\end{equation}
or equivalently
\begin{eqnarray} \label{psi_eqs}
\dbar_z\Psi_{11} - Q_{12}\Psi_{21} &=& 0 \qquad \dd_z\Psi_{21} - Q_{21}\Psi_{11} = 0 \\
\dbar_z\Psi_{12} - Q_{12}\Psi_{22} &=& 0 \qquad \dd_z\Psi_{22} - Q_{21}\Psi_{12} =0.  \nonumber
\end{eqnarray}

These {\sc CGO} solutions $\Psi(z,k)$ are key functions in the reconstructions, but the proof in \cite{francini} does not provide a link from these functions to the DN data.  A useful link can be established through exponentially growing solutions to the admittivity equation \eqref{genLap}.  For $\gamma-1$ with compact support, equation \eqref{genLap} can be studied on all of $\R^2$, and introducing the complex parameter $k$, two distinct exponentially growing solutions, which differ in their asymptotics, exist.  We will denote these solutions by $u_1$ and $u_2$ where $u_1\sim \frac{e^{ikz}}{ik}$ and $u_2\sim \frac{e^{-ik\bar{z}}}{-ik}$ in a sense that is made precise in Theorems~\ref{thm:uexp} and \ref{thm:uexp2}, where the existence of such solutions is established.  The proof will make use of the following lemma proved in the real case by Nachman \cite{nachman}; the complex version shown here also holds and was used in \cite{francini}. The lemma is also true if $\dbarz$ is interchanged with $\dez$.
\begin{lemma}  \label{N}
Let $1<s<2$ and $\frac{1}{r}=\frac{1}{s} -\frac{1}{2}.$
\begin{enumerate}
\item If the complex function $v\in L^{s}(\mathbb{R}^2),$ then there exists a unique complex function $u\in L^{r}(\mathbb{R}^2)$ such that $(\partial_z+ik)u=v.$
\item If the complex function $v\in L^{r}(\mathbb{R}^2)$ and ${\bar{\partial}_z v} \in L^{s}(\mathbb{R}^2)$, $k \in \mathbb{C}\setminus\{0\},$ then there exists a unique complex function $u\in W^{1,r}(\mathbb{R}^2)$ such that $(\partial_z+ik)u=v.$
\item If the complex function $v\in L^{r}(\mathbb{R}^2)$ and ${\bar{\partial}_z v} \in L^{s}(\mathbb{R}^2)$, $k \in \mathbb{C}\setminus\{0\},$ then there exists a unique complex function $u\in W^{1,r}(\mathbb{R}^2)$ such that $(\bar\partial_z-ik)u=v.$
\end{enumerate}
\end{lemma}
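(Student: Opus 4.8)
The plan is to realize each solution operator as a Fourier multiplier, i.e.\ as a Faddeev-type Green's operator, and to reduce it by a modulation to the classical solid Cauchy transform. Writing $\xi=\xi_1+i\xi_2$ for the Fourier variable, $\partial_z+ik$ and $\bar\partial_z-ik$ have symbols $\tfrac i2(\bar\xi+2k)$ and $\tfrac i2(\xi-2k)$, each vanishing at a single point of $\R^2$ — namely $\xi_0:=-2\bar k$, respectively $\xi=2k$ — and that point is nonzero exactly because $k\neq0$. Since the modulation $M_a f:=e^{ia\cdot x}f$ is an isometry of every $L^p(\R^2)$ and translates the Fourier variable by $a$, the candidate solution operator for $\partial_z+ik$ — the multiplier with symbol $\tfrac{-2i}{\bar\xi+2k}$ — is $P_k:=M_{\xi_0}\,\partial_z^{-1}\,M_{-\xi_0}$, where $\partial_z^{-1}$ is convolution with $\tfrac1{\pi\bar z}$; similarly the candidate for $\bar\partial_z-ik$ is $M_{2k}\,\bar\partial_z^{-1}\,M_{-2k}$, with $\bar\partial_z^{-1}$ convolution with $\tfrac1{\pi z}$. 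In both cases the kernel has modulus $\tfrac1{\pi|z|}$, a kernel of homogeneity $-1$ on $\R^2$.

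For the first statement I would set $u:=P_kv$; the elementary identity $(\partial_z+ik)M_{\xi_0}=M_{\xi_0}\partial_z$ gives $(\partial_z+ik)u=v$, and the Hardy--Littlewood--Sobolev inequality for convolution against $\tfrac1{\pi|z|}$ gives $P_k\colon L^s(\R^2)\to L^r(\R^2)$ bounded exactly in the stated range $1<s<2$, $\tfrac1r=\tfrac1s-\tfrac12$ (so $r>2$). For uniqueness, if $u\in L^r$ and $(\partial_z+ik)u=0$ then $(\bar\xi+2k)\hat u=0$ in $\mathcal S'(\R^2)$, so $\hat u$ is a distribution supported at the single point $\xi_0$; hence $u$ equals $e^{i\xi_0\cdot x}$ times a polynomial, and a nonzero polynomial is not in $L^r(\R^2)$ for $r<\infty$.

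For the second and third statements the solution is again the corresponding multiplier operator applied to $v$, which solves the equation for the same reason; the content is to verify $u\in W^{1,r}(\R^2)$. Directly from the equation, $\partial_z u=v-iku$ (resp.\ $\bar\partial_z u=v+iku$), so one of the two first derivatives lies in $L^r$ once $u$ does; in the second statement $\bar\partial_z u=P_k(\bar\partial_z v)$ as tempered distributions (derivatives commute with the multiplier), which is in $L^r$ by the first statement since $\bar\partial_z v\in L^s$. What remains is to show $u\in L^r$ — and, for the third statement, also $\partial_z u\in L^r$. Here I would decompose with a smooth cutoff $\phi$ supported near the pole of the symbol and vanishing near $\xi=0$. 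On the complementary, high-frequency region the symbols involved are smooth of order $\le0$ and satisfy the Mikhlin--H\"ormander conditions, so those pieces are controlled in $L^r$ using $v\in L^r$. On the low-frequency region, since $\phi$ vanishes near the origin one may substitute $\hat v=\tfrac{2}{i\xi}\widehat{\bar\partial_z v}$; this cancels the numerator against the singularity and leaves a multiplier that is compactly supported with an integrable, homogeneity $-1$ singularity at the pole — a modulated, localized solid Cauchy transform — which maps $L^s\to L^r$ by Hardy--Littlewood--Sobolev, and $\bar\partial_z v\in L^s$ by hypothesis. Assembling the pieces gives $u\in W^{1,r}$, and uniqueness follows from uniqueness in the first statement: a difference of two $W^{1,r}$ solutions lies in $L^r$ and is annihilated by the operator. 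The closing remark, that $\bar\partial_z$ may be interchanged with $\partial_z$, is the mirror image of all of this, with the roles of $\partial_z$, $\bar\partial_z$ and of the two pole locations exchanged.

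I expect the crux to be the zeroth-order bound $u\in L^r$: since $r>2$, the solution operator is not bounded on $L^r$ by itself, which is precisely why the lemma assumes both $v\in L^r$ and $\bar\partial_z v\in L^s$ — the first controls high frequencies, the second tames the pole of the symbol at low frequencies. The remaining effort is bookkeeping: checking that the modulated-Cauchy-transform representation really inverts the operator, and tracking the high/low decomposition; the rest follows the by-now-standard route of Nachman \cite{nachman} and Brown--Uhlmann \cite{brownuhlmann}.
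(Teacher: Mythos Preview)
The paper does not give a proof of this lemma: it is stated with the remark that it ``is proved in the real case by Nachman \cite{nachman}; the complex version shown here also holds and was used in \cite{francini},'' and no argument is supplied. Your proposal is correct and follows the standard route used in those references --- conjugating by the modulation $e^{i\xi_0\cdot x}$ to reduce $(\partial_z+ik)^{-1}$ to the solid Cauchy transform, invoking Hardy--Littlewood--Sobolev for the $L^s\to L^r$ bound in part~(1), and for parts~(2)--(3) splitting the symbol into a Mikhlin--H\"ormander piece away from the pole (controlled by $v\in L^r$) and a localized Cauchy-type piece near the pole (controlled by $\bar\partial_z v\in L^s$). The uniqueness argument via the Fourier support of a solution of the homogeneous equation is also the standard one. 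There is nothing to compare against in the paper itself; your write-up would in fact supply the details the paper omits.
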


The following lemma will also be used in the proofs of Theorems~\ref{thm:uexp} and \ref{thm:uexp2}.
\begin{lemma}
For $\omega$ sufficiently small and $\gamma$ satisfying \eqref{eq-fran-1-5} and \eqref{eq-fran-1-7}, the following identities hold:
\begin{eqnarray}
\dbarz({\gamma}(z)^{-1/2}M_{11}(z,k)-1)&=&(\dez+ik)({\gamma}(z)^{-1/2}M_{21}(z,k))\label{help}\\
\partial_z({\gamma}(z)^{-1/2}M_{22}(z,k)-1)&=&(\bar\partial_z-ik)({\gamma}(z)^{-1/2}M_{12}(z,k))\label{help1}.
\end{eqnarray}
\end{lemma}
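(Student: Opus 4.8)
The plan is to derive both identities directly from the defining equations \eqref{M_eqs} for the entries of $M(z,k)$, using only the product rule for $\partial_z$ and $\dbar_z$ together with the relations \eqref{q_frac} between the potential entries $Q_{12},Q_{21}$ and $\gamma^{1/2}$. I will treat \eqref{help} in detail; \eqref{help1} follows by the symmetric computation with the roles of $\partial_z$ and $\dbar_z$ (equivalently $z$ and $\bar z$) interchanged, using the second column of \eqref{M_eqs}.

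First I would expand the left-hand side of \eqref{help}. Writing $\gamma^{-1/2}M_{11}-1$ and applying $\dbar_z$, the constant drops out and the product rule gives
\[
\dbar_z\bigl(\gamma^{-1/2}M_{11}\bigr)=\bigl(\dbar_z\gamma^{-1/2}\bigr)M_{11}+\gamma^{-1/2}\,\dbar_z M_{11}.
\]
For the first term, note $\dbar_z\gamma^{-1/2}=-\tfrac12\gamma^{-3/2}\dbar_z\gamma=\gamma^{-1}\bigl(-\tfrac12\gamma^{-1/2}\dbar_z\gamma\bigr)=\gamma^{-1}\,\dbar_z\gamma^{1/2}\cdot(-1)\cdot(-1)$; more cleanly, by \eqref{q_frac}, $-\dbar_z\gamma^{1/2}/\gamma^{1/2}=Q_{21}$, so $\dbar_z\gamma^{1/2}=-Q_{21}\gamma^{1/2}$ and hence $\dbar_z\gamma^{-1/2}=-\gamma^{-1}\dbar_z\gamma^{1/2}=\gamma^{-1}Q_{21}\gamma^{1/2}=Q_{21}\gamma^{-1/2}$. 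For the second term I substitute the first equation of \eqref{M_eqs}, namely $\dbar_z M_{11}=Q_{12}M_{21}$. This yields
\[
\dbar_z\bigl(\gamma^{-1/2}M_{11}-1\bigr)=Q_{21}\gamma^{-1/2}M_{11}+\gamma^{-1/2}Q_{12}M_{21}.
\]

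Next I would expand the right-hand side similarly: $(\dez+ik)(\gamma^{-1/2}M_{21})=(\dez\gamma^{-1/2})M_{21}+\gamma^{-1/2}\dez M_{21}+ik\,\gamma^{-1/2}M_{21}$. By the analogue of the computation above with $\dez$ in place of $\dbar_z$ (using $Q_{12}=-\dez\gamma^{1/2}/\gamma^{1/2}$ from \eqref{q_frac}), one gets $\dez\gamma^{-1/2}=Q_{12}\gamma^{-1/2}$. For the middle term I use the third equation of \eqref{M_eqs}, $(\dez+ik)M_{21}=Q_{21}M_{11}$, i.e.\ $\dez M_{21}+ik M_{21}=Q_{21}M_{11}$, so that $\gamma^{-1/2}\dez M_{21}+ik\gamma^{-1/2}M_{21}=\gamma^{-1/2}Q_{21}M_{11}$. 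Assembling, the right-hand side equals $Q_{12}\gamma^{-1/2}M_{21}+\gamma^{-1/2}Q_{21}M_{11}$, which coincides termwise with the left-hand side computed above (the scalar factors all commute), establishing \eqref{help}. The identity \eqref{help1} is obtained by the same argument applied to $M_{22}$ and $M_{12}$ using the fourth and second equations of \eqref{M_eqs}.

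The computation is essentially mechanical; the only point requiring a little care — and the place I would be most careful — is the bookkeeping of the logarithmic derivative, i.e.\ verifying that $\dbar_z\gamma^{-1/2}=Q_{21}\gamma^{-1/2}$ and $\dez\gamma^{-1/2}=Q_{12}\gamma^{-1/2}$ with the correct signs, since a sign slip there would misalign the two sides. This relies on the hypotheses \eqref{eq-fran-1-5} and \eqref{eq-fran-1-7}, which (for $\omega$ small) guarantee $\gamma$ is bounded away from $0$ so that $\gamma^{-1/2}$, $\log\gamma$, and their first derivatives are well-defined and bounded, and on the existence and regularity of $M(\cdot,k)$ from \eqref{Mcondition}–\eqref{Dbar_z_for_M} so that the manipulations of $\dbar_z M_{11}$, $\dez M_{21}$, etc., are legitimate in the appropriate Sobolev sense.
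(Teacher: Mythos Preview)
Your proof is correct and follows essentially the same approach as the paper: expand each side with the product rule, use the identities $\dbar_z\gamma^{-1/2}=Q_{21}\gamma^{-1/2}$ and $\dez\gamma^{-1/2}=Q_{12}\gamma^{-1/2}$ coming from \eqref{q_frac}, and invoke the first and third equations of \eqref{M_eqs} to reduce both sides to the common expression $\gamma^{-1/2}Q_{21}M_{11}+\gamma^{-1/2}Q_{12}M_{21}$. The paper organizes the bookkeeping slightly differently (rewriting $Q_{21}M_{11}$ on the left as $(\dez+ik)M_{21}$ before comparing), but the computation is the same.
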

\begin{proof}
By the product rule,
\begin{equation*}
\begin{split}
 \dbarz\left({\gamma}(z)^{-1/2}M_{11}(z,k)-1\right)
&= \dbarz\left({\gamma}(z)^{-1/2}\right)M_{11}(z,k)+ {\gamma}(z)^{-1/2} \dbarz(M_{11}(z,k))\\
&={\gamma}(z)^{-1/2}Q_{21}(z){M_{11}(z,k)}+  {\gamma}(z)^{-1/2}Q_{12}(z)M_{21}(z,k) \\
&={\gamma}(z)^{-1/2}(\dez +ik){M_{21}(z,k)}+  {\gamma}(z)^{-1/2}Q_{12}(z)M_{21}(z,k).
\end{split}
\end{equation*}
The second and third equalities utilized \eqref{Q_log} and \eqref{M_eqs}, respectively.

We also have
\begin{equation*}
\begin{split}
(\dez+ik)\left({\gamma}(z)^{-1/2}M_{21}(z,k)\right)
&= \dez\left({\gamma}(z)^{-1/2}M_{21}(z,k)\right)+ ik{\gamma}(z)^{-1/2}M_{21}(z,k)\\
&= \dez\left({\gamma}(z)^{-1/2}\right) M_{21}(z,k)+  {\gamma}(z)^{-1/2}\dez(M_{21}(z,k))\\
& \qquad+ik{\gamma}(z)^{-1/2}M_{21}(z,k) \\
&= {\gamma}(z)^{-1/2}Q_{12}(z)M_{21}(z,k) +{\gamma}(z)^{-1/2}(\dez +ik)M_{21}(z,k).
\end{split}
\end{equation*}
This establishes \eqref{help}.

Similarly, using \eqref{Q_log} and \eqref{M_eqs},
\begin{equation*}
\begin{split}
{\dez}\left({\gamma}(z)^{-1/2}M_{22}(z,k)-1\right)
&= \dez\left({\gamma}(z)^{-1/2}\right)M_{22}(z,k)+ {\gamma}(z)^{-1/2} \dez(M_{22}(z,k))\\
&= {\gamma}(z)^{-1/2}Q_{12}(z){M_{22}(z,k)}+  {\gamma}(z)^{-1/2}Q_{21}(z)M_{12}(z,k) \\
&= {\gamma}(z)^{-1/2}(\dbarz -ik){M_{12}(z,k)}+  {\gamma}(z)^{-1/2} Q_{21}(z)M_{12}(z,k).
\end{split}
\end{equation*}

We also have
\begin{equation*}
\begin{split}
(\dbarz-ik)\left({\gamma}(z)^{-1/2}M_{12}(z,k)\right)
&= {\dbarz}\left({\gamma}(z)^{-1/2}M_{12}(z,k)\right)- ik{\gamma}(z)^{-1/2}M_{12}(z,k)\\
&= \dbarz\left({\gamma}(z)^{-1/2}\right) M_{12}(z,k)+  {\gamma}(z)^{-1/2}\dbarz(M_{12}(z,k))\\
& \qquad-ik{\gamma}(z)^{-1/2}M_{12}(z,k) \\
&= {\gamma}(z)^{-1/2} Q_{21}(z)M_{12}(z,k) +{\gamma}(z)^{-1/2}(\dbarz -ik)M_{12}(z,k).
\end{split}
\end{equation*}
This establishes \eqref{help1}.
\end{proof}

Knudsen establishes the existence of exponentially growing solutions to the conductivity equation in the context of the inverse conductivity problem in \cite{knudsen}.  The proofs of their existence for the admittivity equation and the associated boundary integral equations are in the same spirit as \cite{knudsen}.

\begin{theorem} \label{thm:uexp}
Let $\gamma(z)\in W^{1,p}(\Omega),$ with $p>2$ such that $\sigma$ and $\epsilon$ satisfy \eqref{eq-fran-1-5} and \eqref{eq-fran-1-7}, and let $\gamma(z) - 1$ have compact support in $W ^{1,p}(\Omega).$ Then for all $k \in\mathbb{C}\setminus\{0\}$ there exists a unique solution
\begin{equation} \label{uw}
u_1(z,k)=e^{ikz}\left[\frac{1}{ik} + w_1(z,k)\right],
\end{equation}
to the admittivity equation in $\mathbb{R}^2$ such that $w_1(\cdot,k)\in W^{1,r}(\mathbb{R}^2),$ $2<r<\infty.$  Moreover, the following equalities hold:
\begin{equation}  \label{e1}
(\dez +ik)\left[e^{-ikz}u_1(z,k) - \frac{1}{ik}\right]= {\gamma}^{-1/2}(z)M_{11}(z,k) -1
\end{equation}
\begin{equation}
\label{e2}
\dbarz\left[e^{-ikz}u_1(x,k)- \frac{1}{ik}\right]= {\gamma}^{-1/2}(z)M_{21}(z,k),
\end{equation}
and
\begin{equation}
\label{e3}
\left\|e^{-ikz}u_1(x,k) - \frac{1}{ik}\right\|_{W^{1,r}(\mathbb{R}^2)}\leq C\left(1+\frac{1}{|k|}\right),
\end{equation}
for some constant C.
\end{theorem}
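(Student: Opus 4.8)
The plan is to follow the argument of Knudsen \cite{knudsen} for the conductivity equation and to tie $u_1$ to the first column $(\Psi_{11},\Psi_{21})=(e^{ikz}M_{11},e^{ikz}M_{21})$ of the {\sc CGO} matrix $\Psi$ of \eqref{matrix21} through the Francini substitution \eqref{vw}: one wants $\gamma^{1/2}\dez u_1=e^{ikz}M_{11}$ and $\gamma^{1/2}\dbarz u_1=e^{ikz}M_{21}$, which, with $w_1:=e^{-ikz}u_1-\tfrac1{ik}$, are exactly the two asserted identities \eqref{e1}--\eqref{e2}. So I would (i) construct $w_1$ by solving \eqref{e1}, (ii) deduce \eqref{e2} from the earlier identity \eqref{help}, (iii) verify that the resulting $u_1$ solves \eqref{genLap}, (iv) prove uniqueness, and (v) prove the estimate \eqref{e3}.

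For (i), set $g_1:=\gamma^{-1/2}M_{11}(\cdot,k)-1$ and apply part (2) of Lemma~\ref{N} to $(\partial_z+ik)w_1=g_1$. Checking the hypotheses is where the standing assumptions enter: splitting $g_1=\gamma^{-1/2}(M_{11}-1)+(\gamma^{-1/2}-1)$ shows $g_1\in L^r$ (the first summand because $M-I\in L^p$ and $\gamma^{-1/2}\in L^\infty$, the second because it is bounded with compact support), while the product rule together with \eqref{q_frac2} and \eqref{M_eqs} gives $\dbarz g_1=\gamma^{-1/2}(Q_{21}M_{11}+Q_{12}M_{21})$, which has compact support since $Q_{12},Q_{21}$ vanish off the support of $\gamma-1$, hence lies in $L^s$. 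Lemma~\ref{N}(2) then yields the unique $w_1(\cdot,k)\in W^{1,r}(\R^2)$ satisfying \eqref{e1}. For (ii), let $\phi:=\dbarz w_1-\gamma^{-1/2}M_{21}$; commuting $\dbarz$ past $\partial_z+ik$, using \eqref{e1}, and then invoking \eqref{help} yields $(\partial_z+ik)\phi=0$, and since $\phi\in L^r$ the uniqueness in Lemma~\ref{N}(1) forces $\phi\equiv0$, which is \eqref{e2}. (This is precisely why the identities \eqref{help}--\eqref{help1} were established first.)

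For (iii), put $u_1:=e^{ikz}(\tfrac1{ik}+w_1)$; \eqref{e1} gives $\gamma\,\dez u_1=\gamma^{1/2}e^{ikz}M_{11}$, and \eqref{e2} together with $\dbarz e^{ikz}=0$ gives $\gamma\,\dbarz u_1=\gamma^{1/2}e^{ikz}M_{21}$. Differentiating these, inserting $\dez\gamma^{1/2}=-\gamma^{1/2}Q_{12}$ and $\dbarz\gamma^{1/2}=-\gamma^{1/2}Q_{21}$ from \eqref{q_frac}, and using \eqref{M_eqs}, the two contributions to $\dez(\gamma\,\dbarz u_1)+\dbarz(\gamma\,\dez u_1)$ cancel, so $\nabla\cdot(\gamma\nabla u_1)=0$ on $\R^2$ — equivalently, $(\Psi_{11},\Psi_{21})$ solves \eqref{psi_eqs}, which, in view of the identity $D\binom{v}{w}-Q_\gamma\binom{v}{w}=0$ for solutions of \eqref{genLap}, is the admittivity equation. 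For (iv), given another solution $\tilde u=e^{ikz}(\tfrac1{ik}+W)$ with $W\in W^{1,r}(\R^2)$, define $N_{11}:=\gamma^{1/2}((\partial_z+ik)W+1)$ and $N_{21}:=\gamma^{1/2}\dbarz W$; then $N_{11}-1,N_{21}\in L^r$, and since $(\gamma^{1/2}\dez\tilde u,\gamma^{1/2}\dbarz\tilde u)=(e^{ikz}N_{11},e^{ikz}N_{21})$ solves $D\binom{v}{w}-Q_\gamma\binom{v}{w}=0$, pulling out $e^{ikz}$ shows $(N_{11},N_{21})$ satisfies the first-column equations of \eqref{M_eqs}. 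By the uniqueness of $M$ (hence of its first column, the second column being common) we get $N_{11}=M_{11}$, $N_{21}=M_{21}$, so $(\partial_z+ik)W=(\partial_z+ik)w_1$, and Lemma~\ref{N} gives $W=w_1$.

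The last step (v) — the estimate \eqref{e3} — is the one I expect to be the main obstacle. It calls for tracking the $k$-dependence through the construction: feeding the bounds on $\|M(\cdot,k)-I\|_{L^p}$ (uniform for $k$ bounded away from $0$) into the solution formula for $w_1$, using that the fundamental solution of $\partial_z+ik$ has modulus $1/(\pi|z|)$ independent of $k$, and estimating $\dez w_1=g_1-ikw_1$ and $\dbarz w_1=\gamma^{-1/2}M_{21}$ separately. The $|k|^{-1}$ in \eqref{e3} is produced by the $\tfrac1{ik}$ normalization — the piece of $w_1$ equal to $\tfrac1{ik}$ times a quantity controlled uniformly in $k$ — exactly as in the conductivity case of \cite{knudsen}; the technical heart is checking that these uniform bounds, together with the auxiliary facts that $M(\cdot,k)$ is bounded and that $M(\cdot,k)-I\in L^p$ for every $p>2$ (which is what makes Lemma~\ref{N} applicable for every admissible $r$), genuinely hold under Francini's smallness assumption on $\omega$.
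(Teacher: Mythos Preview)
Your proposal is correct and follows essentially the same route as the paper: define $g_1=\gamma^{-1/2}M_{11}-1$ (the paper's $v$), check $g_1\in L^r$ and $\dbarz g_1\in L^s$ via the same splitting and the same product-rule computation, invoke Lemma~\ref{N}(2) to produce $w_1$, then use \eqref{help} and Lemma~\ref{N}(1) to get \eqref{e2}. You are in fact more thorough than the paper on points (iii)--(v): the paper does not explicitly verify that the constructed $u_1$ solves \eqref{genLap} or give a separate uniqueness argument, and for \eqref{e3} it simply says the estimate ``follows by Minkowski's Inequality.''
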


\begin{theorem} \label{thm:uexp2}
Let $\gamma(z)$ satisfy the hypotheses of Theorem~\ref{thm:uexp}.  Then for all $k \in\mathbb{C}\setminus\{0\}$ there exists a unique solution
\begin{equation} \label{uw_tilde}
u_2(z,k)=e^{-ik\bar z}\left[\frac{1}{-ik}+w_2(z,k)\right],
\end{equation}
 to the admittivity equation  in $\mathbb{R}^2$ with $w_2(\cdot,k)\in W^{1,r}(\mathbb{R}^2),$ $2<r<\infty.$ Moreover, the following equalities hold:
\begin{equation}
\label{e4}
(\dbarz -ik)\left[e^{ik\bar z}u_2(z,k) + \frac{1}{ik}\right]= {\gamma}^{-1/2}(z)M_{22}(z,k) -1
\end{equation}
\begin{equation}
\label{e5}
\dez\left[e^{ik\bar z}u_2(z,k) + \frac{1}{ik}\right]= {\gamma}^{-1/2}(z)M_{12}(z,k),
\end{equation}
and
\begin{equation}
\label{e6}
\left\|e^{ik\bar z}u_2(z,k) + \frac{1}{ik}\right\|_{W^{1,r}(\mathbb{R}^2)}\leq C\left(1+\frac{1}{|k|}\right),
\end{equation}
for some constant C.
\end{theorem}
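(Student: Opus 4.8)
The plan is to argue exactly as in the proof of Theorem~\ref{thm:uexp}, with the roles of $\dez$ and $\dbarz$ (and of the first and second columns of $M$) interchanged. Writing $w_2 := e^{ik\bar z}u_2 + \frac{1}{ik}$ for the quantity appearing in \eqref{e4}--\eqref{e6}, the identities \eqref{e4}--\eqref{e5} to be proved assert precisely that $w_2$ solves the overdetermined first-order system
\begin{equation}\label{w2sys}
(\dbarz - ik)\,w_2 \;=\; {\gamma}^{-1/2}M_{22}(\cdot,k) - 1 \;=:\; v, \qquad \dez w_2 \;=\; {\gamma}^{-1/2}M_{12}(\cdot,k),
\end{equation}
so the crux is to produce a unique $w_2\in W^{1,r}(\R^2)$ solving \eqref{w2sys} (with $M$ the matrix of \cite{francini}, defined for $\omega$ small). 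Granting that, one sets $u_2 := e^{-ik\bar z}\bigl(\frac{1}{-ik}+w_2\bigr)$ and checks it solves \eqref{genLap}: a one-line computation from \eqref{w2sys} gives $\gamma^{1/2}\dez u_2 = e^{-ik\bar z}M_{12}$ and $\gamma^{1/2}\dbarz u_2 = e^{-ik\bar z}M_{22}$, i.e.\ $\gamma^{1/2}(\dez u_2,\dbarz u_2)$ is the second column of $\Psi(\cdot,k)$ from \eqref{matrix21}; substituting the relations \eqref{psi_eqs} together with $\gamma^{1/2}Q_{12} = -\dez\gamma^{1/2}$ and $\gamma^{1/2}Q_{21} = -\dbarz\gamma^{1/2}$ from \eqref{q_frac} into $\dbarz(\gamma\,\dez u_2) + \dez(\gamma\,\dbarz u_2)$ shows this vanishes, which is \eqref{genLap}. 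Finally $w_2\in W^{1,r}(\R^2)$ with $r>2$ embeds in $C_0(\R^2)$, so $w_2(z)\to0$ as $|z|\to\infty$; this is the precise sense in which $u_2\sim e^{-ik\bar z}/(-ik)$.

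To solve \eqref{w2sys} I would first handle only the scalar equation $(\dbarz-ik)w_2 = v$. Using $M(\cdot,k)-I\in L^p(\R^2)$, $p>2$, together with the $W^{1,p}$-bounds on $M-I$ from \cite{francini} and the compact support and boundedness of $\gamma^{-1/2}-1$, one verifies $v\in L^r(\R^2)$; and by \eqref{help1} together with \eqref{M_eqs},
\[
\dez v \;=\; (\dbarz-ik)\bigl(\gamma^{-1/2}M_{12}\bigr) \;=\; \gamma^{-1/2}\bigl(Q_{21}M_{12} + Q_{12}M_{22}\bigr)\ \in\ L^s(\R^2),
\]
the decisive point being that the explicit factor $-ik$ has cancelled, so that both $\|v\|_{L^r(\R^2)}$ and $\|\dez v\|_{L^s(\R^2)}$ are bounded \emph{uniformly in $k$} by the $k$-uniform bounds on $M$ in \cite{francini}. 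Lemma~\ref{N}, in the form applicable to $(\dbarz-ik)$ (requiring $v\in L^r$ and $\dez v\in L^s$, obtained from the stated version by interchanging $\dbarz$ and $\dez$), then furnishes a unique $w_2\in W^{1,r}(\R^2)$ with $(\dbarz-ik)w_2=v$. To recover the second equation of \eqref{w2sys}, set $h := \dez w_2 - \gamma^{-1/2}M_{12}\in L^r(\R^2)$; then, using the equation just solved and \eqref{help1},
\[
(\dbarz-ik)\,h \;=\; \dez\bigl((\dbarz-ik)w_2\bigr) - (\dbarz-ik)\bigl(\gamma^{-1/2}M_{12}\bigr) \;=\; \dez v - \dez v \;=\; 0,
\]
and since $k\neq0$ the operator $\dbarz-ik$ is injective on $L^r(\R^2)$ (its Fourier symbol does not vanish; equivalently, a short Liouville argument), so $h\equiv0$ and \eqref{w2sys} holds in full. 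The bound \eqref{e6} then falls out of the $k$-dependence in Lemma~\ref{N}: the part of the estimate governed by $\|\dez v\|_{L^s}$ carries an $O(1)$ constant and the part governed by $\|v\|_{L^r}$ carries an $O(1/|k|)$ constant, giving $C(1+1/|k|)$. Uniqueness of $u_2$ in the class \eqref{uw_tilde} follows from uniqueness of $M$ in \cite{francini} (which forces any such $u_2$ to satisfy \eqref{w2sys}) and the uniqueness clause of Lemma~\ref{N}, modulo a constant killed by the decay of $w_2$; rewriting \eqref{w2sys} yields \eqref{e4} and \eqref{e5}.

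The step I expect to be the real obstacle is the uniform-in-$k$ $L^p$-bookkeeping around \eqref{w2sys}: one must track carefully that \emph{every} explicit power of $k$ in $\dez v$ is absorbed upon substituting from the first-order equations \eqref{M_eqs}, because this cancellation is exactly what produces the $1/|k|$ rate in \eqref{e6} rather than a worse one, and one must verify the required $L^r$ and $L^s$ memberships with constants independent of $k$, which rests on the $k$-uniform $L^p$ and $W^{1,p}$ estimates on $M-I$ in \cite{francini}. The injectivity of $\dbarz-ik$ on $L^r(\R^2)$ also has to be argued rather than quoted, since a priori $h$ is only known to lie in $L^r$; but this is the short Liouville/Fourier step indicated above. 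Everything else is routine and parallels the proof of Theorem~\ref{thm:uexp}.
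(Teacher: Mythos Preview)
Your proposal is correct and follows essentially the same route as the paper, which simply states that the proof of Theorem~\ref{thm:uexp2} is analogous to that of Theorem~\ref{thm:uexp}. You have carried out precisely that analogy: set $v=\gamma^{-1/2}M_{22}-1$, check $v\in L^r$ and $\dez v\in L^s$, invoke Lemma~\ref{N} (in its $(\dbarz-ik)$ form) to produce $w_2\in W^{1,r}$, then use identity~\eqref{help1} and the injectivity of $\dbarz-ik$ on $L^r$ (which is exactly the content of Lemma~\ref{N}(1) with right-hand side $0$, applied to the conjugated operator) to recover the second equation in~\eqref{w2sys}. Your additional explicit check that $u_2$ satisfies the admittivity equation, via the second column of $\Psi$ and~\eqref{psi_eqs}, is a welcome clarification that the paper omits.
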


We will prove Theorem~\ref{thm:uexp}; the proof of Theorem~\ref{thm:uexp2} is analogous.
\begin{proof}
Assume $u$ is a solution of the admittivity equation of the form \eqref{uw}, and let $(v,w)^T = \gamma^{1/2}(\dez u,\dbarz u)^T$ be the corresponding solution to $(D-Q_\gamma)\Psi = 0$.  Define the complex function $v$ via $v(z,k)={\gamma}(z)^{-1/2}M_{11}(z,k)-1.$ We will first show there exists a unique complex function $w_1\in W^{1,r}(\mathbb{R}^2)$, where $r>2$ such that $(\dez+ik)w=v$, for $k \in\mathbb{C}\setminus\{0\}.$ Let us rewrite $v$ as follows:
\[v(z,k)=\left.\gamma(z)^{-1/2}\middle[M_{11}(z,k)-1\right]+\left[{\gamma}(z)^{-1/2}-1\right].\]
Let $r>2$ and $1<s<2$ with $\frac{1}{r}=\frac{1}{s} -\frac{1}{2}.$ We know by Theorem 4.1 of \cite{francini} that there exists a constant $C>0$ depending on $\beta$, $\sigma_0$ and $p$ such that $\sup \left\|M_{11}(z,k)-1 \right\|_{L^{r}(\mathbb{R}^2)}\leq C$ for every $r>2$, and that ${\gamma}(z)^{-1/2}-1$ has compact support in $W^{1,r}(\mathbb{R}^2).$ It follows that $v\in L^r(\mathbb{R}^2)$, and by Minkowski's Inequality
\[\left\|v(z,k)\right\|_{L^{r}}=\left\|\left.\gamma(z)^{-1/2}\middle[M_{11}(z,k)-1\right]+\left[{\gamma}(z)^{-1/2}-1\right]\right\|_{L^r}\leq C_{r, \gamma},\]
where $C_{r, \gamma}$ depends on $r$ and the bounds on $\sigma$ and $\epsilon$.

From \eqref{q_frac2},
\begin{eqnarray*}
\dbarz v(z,k)  & =& \dbarz({\gamma}(z)^{-1/2}M_{11}(z,k)-1) \\
&=& (\dbarz{\gamma}(z)^{-1/2})M_{11}(z,k)+ {\gamma}(z)^{-1/2} (\dbarz M_{11}(z,k)) \\
&= &{\gamma}(z)^{-1/2}Q_{21}(z)M_{11}(z,k)+  {\gamma}(z)^{-1/2}Q_{12}(z)M_{21}(z,k) \\
&= &{\gamma}(z)^{-1/2}Q_{21}(z)[M_{11}(z,k)-1] + \gamma(z)^{-1/2}Q_{21}(z)\\
&&\quad +\;\gamma(z)^{-1/2}Q_{12}(z)M_{21}(z,k)
\end{eqnarray*}

We know that ${\gamma}(z)^{-1/2}Q_{21}(z)  \in L^{\alpha}(\mathbb{R}^2)$ with $1\leq {\alpha}\leq {p}$	since $Q_{12}(z)$ has compact support. It follows that ${\gamma}(z)^{-1/2}Q_{21}(z)\in L^{s}(\mathbb{R}^2)\cap L^{2}(\mathbb{R}^2).$ By the generalized H\"older's inequality and the fact that $\left\| M_{11}(z,k)-1\right\|_{L^{s}}$ is bounded with $\frac{1}{s}=\frac{1}{r}+\frac{1}{2},$ we have $\dbarz v\in L^{s}(\mathbb{R}^2)$ and
$\left\|\dbarz v\right\|_{L^{s}(\mathbb{R}^2)} \leq K_{r,\gamma},$
where $K_{r,\gamma}$ depends only on $r$ and the bounds on $\sigma$ and $\epsilon$. Thus, by Lemma \ref{N} (2), there exists a unique solution $w_1(z,k)\in W^{1,r}(\mathbb{R}^2)$ such that
\begin{equation}
\label{construct}
(\dez +ik)w_1(z,k)={\gamma}(z)^{-1/2}M_{11}(z,k)-1.
\end{equation}
We have by \eqref{help},
\begin{equation}
\label{yeah}
\bar{\partial}({\gamma}(z)^{-1/2}M_{11}(z,k)-1)=\left(\dez +ik\right)\left(\gamma(z)^{-1/2}M_{21}(z,k)\right).
\end{equation}
Taking $\dbarz$ of both sides of \eqref{construct} and using \eqref{yeah},
\begin{equation}
\begin{split}
\dbarz\left(\dez+ik\right)w_1(z,k)
& = \dbarz\left({\gamma}(z)^{-1/2}M_{11}(z,k)-1\right)\\
&= \left(\dez+ik\right)\left({\gamma}(z)^{-1/2}M_{21}(z,k)\right).
\end{split}
\end{equation}
Using the fact $\bar{\partial}(\partial+ik)=(\partial+ik)\bar{\partial}$, it follows that
\begin{equation}
\label{unique3}
\left(\dez+ik\right)\left(\dbarz w_1(z,k)-{\gamma}(z)^{-1/2}M_{21}(z,k)\right)=0.
\end{equation}
Since $\dbarz w_1(z,k)-{\gamma}(z)^{-1/2}M_{21}(z,k)\in L^{r}(\mathbb{R}^2),$ by Lemma \ref{N} (1), we must have
\begin{equation}
\label{sure}
\dbarz w_1(z,k)={\gamma}(z)^{-1/2}M_{21}(z,k).
\end{equation}
We now define
\begin{equation}
\label{cu}
u_1(z,k)=e^{ikz}\left[w_1(z,k)+\frac{1}{ik}\right],
\end{equation}
then by \eqref{construct}
\[\left(\dez +ik\right)\left(e^{-ikz}u_1(z,k) - \frac{1}{ik}\right)=\left(\dez +ik\right)w_1(z,k)= {\gamma}^{-1/2}(z)M_{11}(z,k) -1,\]
which proves \eqref{e1}, and by \eqref{sure}
\[\dbarz\left(e^{-ikz}u_1(z,k)- \frac{1}{ik}\right)=\dbarz w_1(z,k)= {\gamma}^{-1/2}(z)M_{21}(z,k),\]
which proves \eqref{e2}.

The norm estimate given by \eqref{e3} follows by Minkowski's Inequality, the constant $C$ depends on $r$, the bound on $\gamma-1$, and the bounds on $\sigma$ and $\epsilon$.
\end{proof}

\noindent
\textbf{Remark:} Note that from \eqref{e1}
\begin{equation}
\begin{split}
\gamma^{-1/2}M_{11}(z,k)-1
&=(\dez+ik)\left(e^{-ikz}u_1(z,k)-\frac{1}{ik}\right)\\
&=\dez(e^{-ikz}u_1)+ike^{-ikz} u_1(z,k) -1 \\
&=e^{-ikz}\dez u_1(z,k) - 1,
\end{split}
\end{equation}
and from \eqref{e2}
\begin{equation}
\begin{split}
\gamma^{-1/2}M_{21}(z,k)
&=\dbarz\left(e^{-ikz}u_1(z,k)-\frac{1}{ik}\right)\\
&=u_1(z,k)\dbarz\left(e^{-ikz}\right)+e^{-ikz}\dbarz u_1(z,k)\\
&=e^{-ikz}\dbarz u_1(z,k).
\end{split}
\end{equation}
Thus, we can equivalently rewrite \eqref{e1} and \eqref{e2}, respectively, as
\begin{eqnarray}
\gamma^{1/2}(z)\dez u_1(z,k)&=&e^{ikz}M_{11}(z,k)=\Psi_{11}(z,k)\label{e7}\\
\gamma^{1/2}(z)\dbarz u_1(z,k)&=&e^{ikz}M_{21}(z,k)=\Psi_{21}(z,k).\label{e8}
\end{eqnarray}
In a similar manner, we can rewrite \eqref{e4} and \eqref{e5}, respectively, as
\begin{eqnarray}
\gamma^{1/2}(z)\dbarz u_2(z,k) &=& e^{-ik\bar z}M_{22}(z,k)=\Psi_{22}(z,k)  \label{e9}\\
\gamma^{1/2}(z)\dez u_2(z,k) &=& e^{-ik\bar{z}}M_{12}(z,k)=\Psi_{12}(z,k)\label{e10}.
\end{eqnarray}

Useful boundary integral equations for the traces of $u_1$ and $u_2$ can be derived under the additional assumption that $\gamma\in W^{2,p}$ and $u_1, u_2\in W^{2,p}$, $p>1$.  The following proposition shows a relationship between the exponentially growing solutions $\psi_S(z,k)$ (when they exist) to the Schr\"{o}dinger equation
\begin{equation} \label{Schr}
(-\Delta + q_S(z))\psi_S (z,k)=0,
\end{equation}
and the {\sc CGO} solutions $u_1$ and $u_2$ to \eqref{genLap}.  The solution $\psi_S$  to \eqref{Schr}, where $q_S$ is complex, is asymptotic to $e^{ikz}$ in the sense that
\[w_S\equiv e^{-ikz}\psi_S(\cdot,k)-1 \in L^{\tilde{p}}\cap L^{\infty},\]
where $\frac{1}{\widetilde p}=\frac{1}{p}-\frac{1}{2}$ and $1<p<2$.  
The question of the existence of a unique solution  to \eqref{Schr} is addressed for real $\gamma$ in \cite{nachman}, where it is shown to exist if and (roughly) only if $q_S=\frac{\Delta\gamma^{1/2}}{\gamma^{1/2}}.$  The solutions $\psi_S$ will be used to derive the boundary integral equations for $u_1$ and $u_2$, but not in the direct reconstruction algorithm.


\begin{lemma}  
Let $\gamma(z)=\sigma(z)+i\omega\epsilon(z)\in W^{2,p}(\Omega),$ with $p>2$ such that $\sigma$ and $\epsilon$ satisfy \eqref{eq-fran-1-5} and \eqref{eq-fran-1-7}, and let $\gamma(z) - 1$ have compact support in $W ^{1,p}(\Omega).$  Let $u_1$ be the exponentially growing solution to the admittivity equation as given in Theorem \ref{thm:uexp}, and let $\psi_S$ be the exponentially growing solution to the Schr\"{o}dinger equation \eqref{Schr},
when it exists. Then
\begin{equation}  \label{sa}
iku_1(z,k)={\gamma}^{-1/2}(z)\psi_S(z,k).
\end{equation}
\end{lemma}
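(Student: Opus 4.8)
The plan is to recognize the function $v(z,k):=ik\,\gamma^{1/2}(z)\,u_1(z,k)$ as an exponentially growing solution of the Schr\"odinger equation \eqref{Schr} with potential $q_S=\Delta\gamma^{1/2}/\gamma^{1/2}$, and then to invoke the uniqueness of such a solution. Thus there are two things to verify: first, that $v$ actually solves \eqref{Schr}; second, that $v$ has exactly the asymptotics that characterize $\psi_S$, i.e.\ $e^{-ikz}v(\cdot,k)-1\in L^{\tilde p}\cap L^{\infty}$ with $\tfrac1{\tilde p}=\tfrac1p-\tfrac12$, $1<p<2$. Granting both, the uniqueness statement for the {\sc CGO} solution of \eqref{Schr} (in the complex-potential form used in \cite{francini}, following \cite{nachman}) forces $v=\psi_S$, which is precisely \eqref{sa} after dividing by $ik\,\gamma^{1/2}$.

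For the first point I would carry out the classical Liouville substitution. Since $\gamma\in W^{2,p}(\Omega)$ with $p>2$ and $\gamma$ has real part bounded below by $\sigma_0>0$, the principal branch of $\gamma^{1/2}$ is again in $W^{2,p}$, and by hypothesis $u_1\in W^{2,p}$, so the following identity holds a.e.\ and all terms are genuine $L^p$ functions:
\[
\nabla\cdot\bigl(\gamma\,\nabla(\gamma^{-1/2}\psi)\bigr)=\gamma^{1/2}\,\Delta\psi-\psi\,\Delta\gamma^{1/2},
\]
obtained by the product rule together with $\gamma\,\nabla\gamma^{-1/2}=-\nabla\gamma^{1/2}$, the cross terms $\nabla\gamma^{1/2}\!\cdot\!\nabla\psi$ cancelling. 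Applying this with $\psi=\gamma^{1/2}u_1$ and using $\nabla\cdot(\gamma\nabla u_1)=0$ from \eqref{genLap} gives $\gamma^{1/2}\Delta(\gamma^{1/2}u_1)-(\gamma^{1/2}u_1)\Delta\gamma^{1/2}=0$, i.e.\ $(-\Delta+q_S)(\gamma^{1/2}u_1)=0$; multiplying by the constant $ik$ shows $v$ solves \eqref{Schr}. Note that $q_S\in L^p(\R^2)$ has compact support, since $\gamma^{1/2}-1$ does and $\gamma^{1/2}\ge\sqrt{\sigma_0}>0$, so $q_S$ is an admissible potential for the theory.

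For the asymptotics, using \eqref{uw} write
\[
e^{-ikz}v(z,k)-1=ik\,\gamma^{1/2}(z)\!\left[\tfrac1{ik}+w_1(z,k)\right]-1=\bigl(\gamma^{1/2}(z)-1\bigr)+ik\,\gamma^{1/2}(z)\,w_1(z,k).
\]
The term $\gamma^{1/2}-1$ is compactly supported and lies in $W^{1,p}(\R^2)$, hence in $L^{\tilde p}\cap L^{\infty}$. For the second term, Theorem~\ref{thm:uexp} gives $w_1(\cdot,k)\in W^{1,r}(\R^2)$ for every $r\in(2,\infty)$; Morrey's embedding $W^{1,r}(\R^2)\hookrightarrow L^{\infty}(\R^2)$ (valid for $r>2$) yields $w_1\in L^{\infty}$, and taking $r=\tilde p>2$ yields $w_1\in L^{\tilde p}$, so that $ik\,\gamma^{1/2}w_1\in L^{\tilde p}\cap L^{\infty}$ because $\gamma^{1/2}$ is bounded. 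Hence $e^{-ikz}v-1\in L^{\tilde p}\cap L^{\infty}$, which is the normalization class defining $\psi_S$; by uniqueness, $v=\psi_S$, establishing \eqref{sa}.

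I expect the main obstacle to be bookkeeping rather than ideas: one must genuinely use the extra hypothesis $\gamma,u_1\in W^{2,p}$ flagged just before the lemma so that $\Delta\gamma^{1/2}$ and $\Delta(\gamma^{1/2}u_1)$ are honest $L^p$ functions and the product-rule manipulations in the Liouville substitution are legitimate, and one must match the asymptotic space of $e^{-ikz}v-1$ precisely to the space in which $\psi_S$ is characterized as the unique solution. Since here $q_S$ is complex, the uniqueness invoked is the non-self-adjoint version of Nachman's result, which is the form available in the setting of \cite{francini}.
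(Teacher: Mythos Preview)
Your argument is correct. You carry out the Liouville substitution carefully, verify that $v=ik\,\gamma^{1/2}u_1$ solves \eqref{Schr}, and check the asymptotic normalization $e^{-ikz}v-1=(\gamma^{1/2}-1)+ik\,\gamma^{1/2}w_1\in L^{\tilde p}\cap L^\infty$ using Theorem~\ref{thm:uexp} and Morrey's embedding; then you conclude by uniqueness of the Schr\"odinger {\sc CGO} solution.

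The paper, however, runs the identification in the opposite direction. Rather than pushing $u_1$ over to the Schr\"odinger side, it writes
\[
iku_1=e^{ikz}\gamma^{-1/2}\bigl(1+[\gamma^{1/2}-1]+ik\,\gamma^{1/2}w_1\bigr),\qquad
\gamma^{-1/2}\psi_S=e^{ikz}\gamma^{-1/2}(1+w_S),
\]
observes that both are exponentially growing solutions of the \emph{admittivity} equation with the remainder in the right Sobolev space, and then invokes the uniqueness just established in Theorem~\ref{thm:uexp}. The Liouville substitution is used implicitly in the other direction (to see that $\gamma^{-1/2}\psi_S$ solves \eqref{genLap}). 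The advantage of the paper's route is that the uniqueness statement it needs is the one freshly proved in Theorem~\ref{thm:uexp}, so the argument is self-contained within the paper; your route instead leans on uniqueness of the {\sc CGO} solution for \eqref{Schr} with a complex potential, which you rightly flag as requiring the non-self-adjoint version from \cite{nachman,francini}. Conversely, your write-up is more explicit about the product-rule computation and about why the $W^{2,p}$ hypothesis on $\gamma$ and $u_1$ is needed, which the paper leaves tacit.
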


\begin{proof}  
From \eqref{uw},
\begin{equation*}
\begin{split}
iku_1(z,k)
& = e^{ikz}(1+ikw_1(z,k))\\
& = e^{ikz}{\gamma}^{-1/2}(z)\left[{\gamma}^{1/2}(z)+ {\gamma}^{1/2}(z)ikw_1(z,k)\right]\\
& = e^{ikz}{\gamma}^{-1/2}(z)\left(1+\left[{\gamma}^{1/2}(z) -1\right]+{\gamma}^{1/2}(z)ikw_1(z,k)\right)
\end{split}
\end{equation*}
satisfies the admittivity equation with $[{\gamma}^{1/2}(z)-1]+ {\gamma}^{1/2}(z) ikw_1(z,k)\in W^{1,r}(\Omega)$  for   $r>2.$  We also know that when it exists,
\begin{equation} \label{psi_S_rel}
\gamma^{-1/2}(z)\psi_S(z,k)=e^{ikz}\gamma^{-1/2}(z)(1+w_S(z,k))
\end{equation}
is also a solution to the admittivity equation with  $w_S(z,k)\in W^{1,\bar{p}}(\mathbb{R}^2).$
Hence, these exponentially growing solutions must be equal.
\end{proof}

\begin{lemma}  
Let $\gamma(z)=\sigma(z)+i\omega\epsilon(z)\in W^{2,p}(\Omega),$ with $p>2$ such that $\sigma$ and $\epsilon$ satisfy \eqref{eq-fran-1-5} and \eqref{eq-fran-1-7}, and let $\gamma(z) - 1$ have compact support in $W ^{1,p}(\Omega).$  Let $u_2$ be the exponentially growing solution to the admittivity equation as given in Theorem \ref{thm:uexp2}, and let $\psi_S$ be the exponentially growing solution to the Schr\"{o}dinger equation \eqref{Schr},
when it exists. Then
\begin{equation} \label{sa2}
-iku_2(z,k)={\gamma}^{-1/2}(-\bar{z})\psi_S(-\bar{z},k).
\end{equation}
\end{lemma}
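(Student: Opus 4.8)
The natural approach is to deduce \eqref{sa2} from the previous lemma by exploiting the symmetry of the admittivity equation under the reflection $R\colon z\mapsto-\bar z$; this is precisely the symmetry that interchanges the two exponential profiles $e^{ikz}$ and $e^{-ik\bar z}$, hence the roles of $u_1$ and $u_2$.

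First I would record the elementary facts about $R$, the reflection of $\R^2$ across the imaginary axis: it is an isometry, so $\Delta(f\circ R)=(\Delta f)\circ R$, while $\dez(f\circ R)=-(\dbarz f)\circ R$ and $\dbarz(f\circ R)=-(\dez f)\circ R$. Consequently $u\mapsto u\circ R$ maps solutions of $\nabla\cdot(\gamma\nabla u)=0$ to solutions of $\nabla\cdot(\tilde\gamma\nabla v)=0$ with $\tilde\gamma(z):=\gamma(-\bar z)$, and $\tilde\gamma$ again satisfies \eqref{eq-fran-1-5} and \eqref{eq-fran-1-7}, lies in $W^{2,p}$, and has $\tilde\gamma-1$ of compact support, since all of these are preserved by $R$. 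Next I would set $\tilde u_1(z,k):=-u_2(-\bar z,k)$; using \eqref{uw_tilde} one checks $\tilde u_1(z,k)=e^{ikz}\bigl[\tfrac1{ik}-w_2(-\bar z,k)\bigr]$ with $-w_2(-\bar z,\cdot)\in W^{1,r}(\R^2)$, and by the intertwining above $\tilde u_1$ solves the admittivity equation for $\tilde\gamma$; by the uniqueness in Theorem~\ref{thm:uexp}, $\tilde u_1$ is exactly the exponentially growing solution $u_1$ attached to $\tilde\gamma$.

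Then I would apply the preceding lemma with $\tilde\gamma$ in place of $\gamma$, obtaining $ik\,\tilde u_1(z,k)=\tilde\gamma^{-1/2}(z)\tilde\psi_S(z,k)$, where $\tilde\psi_S$ is the Schr\"odinger {\sc CGO} for $\tilde\gamma$ (whose potential is $\tilde q_S=\Delta\tilde\gamma^{1/2}/\tilde\gamma^{1/2}=q_S\circ R$, again because $\Delta$ commutes with $R$). Replacing $z$ by $-\bar z$ and using $\tilde u_1(-\bar z,k)=-u_2(z,k)$ and $\tilde\gamma(-\bar z)=\gamma(z)$ turns this into $-iku_2(z,k)=\gamma^{-1/2}(z)\,\tilde\psi_S(-\bar z,k)$, and identifying $\tilde\psi_S(-\bar z,k)$ with the Schr\"odinger {\sc CGO} evaluated at the reflected point, as in the right-hand side of \eqref{sa2}, completes the proof. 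The main obstacle is the bookkeeping here: one must track carefully how $R$ sends $\gamma\mapsto\tilde\gamma$, $q_S\mapsto q_S\circ R$, the profile $e^{ikz}\mapsto e^{-ik\bar z}$, and conjugates $\dez\leftrightarrow\dbarz$, since it is exactly these intertwinings that produce the reflected arguments on the right-hand side of \eqref{sa2}. A parallel alternative is to mimic the proof of the previous lemma directly: write $-iku_2(z,k)=e^{-ik\bar z}\gamma^{-1/2}(z)\bigl(1+[\gamma^{1/2}(z)-1]-ik\gamma^{1/2}(z)w_2(z,k)\bigr)$, note this is an exponentially growing solution of the admittivity equation whose bracketed remainder lies in $W^{1,r}$, and match it with the unique Schr\"odinger-derived solution carrying the same $e^{-ik\bar z}$ asymptotics.
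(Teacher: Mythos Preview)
Your ``parallel alternative'' at the end is precisely the paper's route: the paper expands
\[
-iku_2(z,k)=e^{-ik\bar z}\,\gamma^{-1/2}(-\bar z)\Bigl(1+\bigl[\gamma^{1/2}(-\bar z)-1\bigr]-ik\,\gamma^{1/2}(-\bar z)\,w_2(z,k)\Bigr),
\]
observes this is an exponentially growing solution of the admittivity equation with $W^{1,r}$ remainder, writes $\gamma^{-1/2}(-\bar z)\psi_S(-\bar z,k)=e^{-ik\bar z}\gamma^{-1/2}(-\bar z)(1+w_S(-\bar z,k))$ in the same form, and invokes uniqueness. So your second approach matches the paper (note, though, that the paper factors out $\gamma^{-1/2}(-\bar z)$ rather than your $\gamma^{-1/2}(z)$, to line up with the stated right-hand side).

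Your primary reflection argument is a different and conceptually clean route, but the final identification step is a genuine gap. After substituting $z\mapsto-\bar z$ you correctly obtain
\[
-iku_2(z,k)=\gamma^{-1/2}(z)\,\tilde\psi_S(-\bar z,k),
\]
where $\tilde\psi_S$ is the Schr\"odinger CGO attached to the reflected potential $\tilde q_S=q_S\circ R$. However, $\tilde\psi_S(-\bar z,k)$ and $\psi_S(-\bar z,k)$ are not the same function: as functions of $z$, the first solves $(-\Delta+q_S)v=0$ while the second solves $(-\Delta+\tilde q_S)v=0$, both with $e^{-ik\bar z}$ asymptotics. Your phrase ``identifying $\tilde\psi_S(-\bar z,k)$ with the Schr\"odinger CGO evaluated at the reflected point'' therefore does not deliver $\gamma^{-1/2}(-\bar z)\psi_S(-\bar z,k)$ as in \eqref{sa2}; you also land on $\gamma^{-1/2}(z)$ rather than $\gamma^{-1/2}(-\bar z)$. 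The intertwinings you recorded for $R$ are correct, but they do not by themselves force $\tilde\psi_S(-\bar z,k)=\psi_S(-\bar z,k)$ unless $q_S$ is $R$-symmetric, so this last step needs a separate argument (or one should simply fall back on the direct matching, as the paper does).
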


\begin{proof}  
From \eqref{uw_tilde},
\begin{equation*}
\begin{split}
-iku_2(z,k)
& = e^{-ik\bar{z}}(1-ikw_2(z,k))\\
& = e^{-ik\bar{z}}{\gamma}^{-1/2}(-\bar{z})\left(1+\left[{\gamma}^{1/2}(-\bar{z}) -1\right]-{\gamma}^{1/2}(-\bar{z})ikw_2(z,k)\right)
\end{split}
\end{equation*}
satisfies the admittivity equation with $[{\gamma}^{1/2}(-\bar{z})-1]-{\gamma}^{1/2} (-\bar{z})ikw_2(z,k)\in W^{1,r}(\Omega)$  for   $r>2.$  From \eqref{psi_S_rel},
\begin{equation*}
{\gamma}^{1/2}(-\bar{z})\psi_S(-\bar{z},k)=e^{-ik\bar{z}}{\gamma}^{1/2}(-\bar{z})(1+w_S(-\bar{z},k))
\end{equation*}
satisfies the admittivity equation with $w_S(-\bar{z},k)\in W^{1,\bar{p}}(\mathbb{R}^2).$
Thus, these exponentially growing solutions must be equal, and so
\[-iku_2(z,k)={\gamma}^{-1/2}(-\bar{z})\psi_S(-\bar{z},k).\]
\end{proof}

Let us recall some terminology arising from \cite{nachman} before establishing boundary integral equations involving the exponentially growing solutions.  Let $\Lambda_{\sigma}$ be the Dirichlet-to-Neumann map when $\Omega$  contains the conductivity distribution $\sigma$,
and $\Lambda_{1}$ is the Dirichlet-to-Neumann map for a homogeneous conductivity equal to 1. The Faddeev Green's function $G_k(z)$ is  defined by
\begin{equation}
\label{g1}
G_k(z) := e^{ik\cdot z}g_k(z), \,\,\,\, -\Delta G_k = \delta,
\end{equation}
where
\begin{equation}
\label{g2}
g_k(z) :=\frac{1}{(2\pi)^2}\int_{\mathbb{R}^2}\frac{e^{iz\cdot\xi}}
{\xi(\bar{\xi}+2k)}d\xi, \, \,\,\,\,  (-\Delta -4ik\overline{\partial})g_k = \delta,
\end{equation}
for $k\in\mathbb{C}\setminus\{0\}$.  In the  real-valued case $\gamma = \sigma$, the trace of the function $\psi_S(\cdot,k)$ on $\DOm$ satisfies the integral equation \cite{nachman}
\begin{equation}  \label{ie}
\psi_S(z,k)=e^{ikz}-\int_{\DOm}G_k(z-\zeta) (\Lambda_{\sigma}-\Lambda_{1})\psi_S(\zeta,k)dS(\zeta), \quad z\in {\partial\Omega},
\end{equation}
where ${k}\in \mathbb{C}\setminus\left\{0\right\}.$ The equation \eqref{ie} is a Fredholm equation of the second kind and uniquely solvable in $H^{1/2}(\partial\Omega)$ for any $k\in \mathbb{C}\setminus\left\{0\right\}.$

The boundary integral equations for $u_1$ and $u_2$ are similar to \eqref{ie}.
\begin{theorem}  \label{thm:u1_BIE}  
Let $\gamma\in W^{2,p}(\Omega)$ for $p>1$ and suppose $\gamma=1$ in a neighborhood of $\partial\Omega$. Suppose $\sigma$ and $\epsilon$ satisfy \eqref{eq-fran-1-5} and \eqref{eq-fran-1-7}, and let $\gamma(z) - 1$ have compact
support in $W ^{2,p}(\Omega).$  Then for any nonexceptional $k\in \mathbb{C}\setminus\left\{0\right\}$, the trace of the exponentially growing solution $u_1(\cdot,k)$ on $\partial\Omega$ is the unique solution to
\begin{equation} \label{bie2}
u_1(z,k)=\frac{e^{ikz}}{ik}-\int_{\DOm}G_{k}(z-\zeta)(\Lambda_{\gamma}-\Lambda_{1})u_1(\zeta,k)dS(\zeta),\hspace{0.5 cm} z\in \partial\Omega.
\end{equation}
\end{theorem}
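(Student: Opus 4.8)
The plan is to derive \eqref{bie2} from the boundary integral equation \eqref{ie} for the Schr\"odinger {\sc CGO} solution $\psi_S$, transporting it to $u_1$ through the identity \eqref{sa}, namely $iku_1(z,k)=\gamma^{-1/2}(z)\psi_S(z,k)$. The role of the hypothesis $\gamma\equiv 1$ in a neighborhood of $\partial\Omega$ is precisely to make this transport clean: there $\gamma^{-1/2}\equiv 1$, so $\psi_S(\zeta,k)=ik\,u_1(\zeta,k)$ for $\zeta\in\partial\Omega$; and the Schr\"odinger potential $q_S=\Delta\gamma^{1/2}/\gamma^{1/2}$ --- a well-defined, compactly supported element of $L^p(\Omega)$ exactly because $\gamma-1\in W^{2,p}$ --- vanishes near $\partial\Omega$, so Nachman's relation between the Schr\"odinger and conductivity DN maps collapses on the boundary to $\Lambda_{q_S}=\Lambda_\gamma$ (its correction terms involve $\gamma$ and $\partial_\nu\gamma$ on $\partial\Omega$, which are $1$ and $0$). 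I would first record these two boundary identifications, noting that the existence of $\psi_S$ for nonexceptional $k$, the DN-map relation, and \eqref{ie} itself all carry over verbatim to complex $q_S$; such complex analogues of Nachman's results are the ones used in \cite{francini}.

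Granting \eqref{ie} in its complex form --- which, if one does not wish to cite it, follows from the Lippmann--Schwinger equation $\psi_S(z,k)=e^{ikz}-(G_k*q_S\psi_S)(z)$ (valid by $-\Delta G_k=\delta$ and the uniqueness in Lemma~\ref{N}) upon applying Green's second identity on $\Omega$, where $q_S\psi_S=\Delta\psi_S$, and subtracting the same identity applied to the harmonic function $e^{ik\,\cdot}$, whose volume term vanishes identically, so that $\partial_\nu\psi_S$ and $\partial_\nu e^{ik\,\cdot}$ combine into $(\Lambda_{q_S}-\Lambda_1)\psi_S|_{\partial\Omega}$ --- one divides \eqref{ie} by $ik$. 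Using $\psi_S(\cdot,k)=ik\,u_1(\cdot,k)$ on $\partial\Omega$, both on the left-hand side and, by linearity of $\Lambda_\gamma-\Lambda_1$, inside the integral, together with $\Lambda_{q_S}=\Lambda_\gamma$, gives for $z\in\partial\Omega$
\begin{equation*}
u_1(z,k)=\frac{e^{ikz}}{ik}-\int_{\DOm}G_k(z-\zeta)(\Lambda_\gamma-\Lambda_1)u_1(\zeta,k)\,dS(\zeta),
\end{equation*}
which is \eqref{bie2}. The step I expect to be the main obstacle is the one hidden inside the derivation of \eqref{ie}: the Faddeev kernel $G_k(z-\cdot)$ is singular when $z\in\partial\Omega$, and the factor $e^{ikz}$ does not decay, so the Green's-identity manipulation and the resulting mapping properties must be justified using the precise decay of $g_k$ and the jump relations of the Faddeev single- and double-layer potentials; this is the technical heart of Nachman's argument and is what makes \eqref{ie} a Fredholm equation of the second kind in $H^{1/2}(\partial\Omega)$.

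It then remains to see that $u_1(\cdot,k)|_{\partial\Omega}$ is the \emph{unique} solution of \eqref{bie2}. Since $\gamma\equiv 1$ near $\partial\Omega$, for $f\in H^{1/2}(\partial\Omega)$ the function $(\Lambda_\gamma-\Lambda_1)f$ is the normal derivative of a function harmonic in the region between $\operatorname{supp}(\gamma-1)$ and $\partial\Omega$, hence lies in $H^{1/2}(\partial\Omega)$; thus $\Lambda_\gamma-\Lambda_1$ is compact from $H^{1/2}(\partial\Omega)$ into $H^{-1/2}(\partial\Omega)$, and since convolution against $G_k$ on $\partial\Omega$ is bounded from $H^{-1/2}(\partial\Omega)$ into $H^{1/2}(\partial\Omega)$, the integral operator in \eqref{bie2} is compact on $H^{1/2}(\partial\Omega)$. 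Hence \eqref{bie2} has the form $(I+K)u_1=e^{ikz}/(ik)$ with $K$ compact, and by $\Lambda_{q_S}=\Lambda_\gamma$ its homogeneous version coincides with the homogeneous version of \eqref{ie}, which has only the trivial solution for nonexceptional $k$; so for such $k$ the solution is unique, and since $u_1(\cdot,k)|_{\partial\Omega}$ solves \eqref{bie2}, it is that solution. The statement for $u_2$ follows by the same scheme, starting from \eqref{sa2} and the analogue of \eqref{ie} centered at $e^{-ik\bar z}$.
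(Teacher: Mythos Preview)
Your approach is correct and shares the paper's core idea: transport Nachman's boundary integral equation \eqref{ie} for $\psi_S$ to $u_1$ via the identity \eqref{sa}, using $\gamma\equiv 1$ near $\partial\Omega$ to equate $\psi_S=ik\,u_1$ on the boundary. The paper, however, executes this differently: it introduces an approximating sequence $\{\gamma_n\}\subset W^{2,r}$ converging to $\gamma$, applies \eqref{ie} and \eqref{sa} to each $\gamma_n$ to obtain \eqref{bie2} for the approximants $u^n$, and then passes to the limit using the continuous dependence of $M$ on $\gamma$ (Theorem~3.1 of \cite{francini}). Your direct argument bypasses this approximation entirely, which is cleaner under the stated hypothesis $\gamma\in W^{2,p}$; the paper's sequence seems to be a vestige of wanting the result to descend to $W^{1,p}$ admittivities, though the theorem as stated does not require that.

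For uniqueness the two diverge more noticeably. The paper simply writes ``uniqueness of $u_1(\cdot,k)|_{\partial\Omega}$ follows by Theorem~\ref{thm:uexp},'' invoking the uniqueness of the CGO solution itself; strictly speaking this shows the trace is well-defined, not that \eqref{bie2} has at most one solution in $H^{1/2}(\partial\Omega)$. Your Fredholm argument---compactness of the boundary operator plus triviality of the kernel via the homogeneous version of \eqref{ie} for nonexceptional $k$---addresses the latter point directly and is the more complete justification of the theorem's claim.
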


\begin{proof}  
Let $\frac{1}{p}=\frac{1}{r}-\frac{1}{2},$ where $1<r<2$ and $p>2.$ Let $\left\{\gamma_n \right\}_{n \in\textbf{N}}\subset W^{2,r}(\Omega)$ be a sequence converging to $\gamma\in W^{1,p}(\Omega)$.  Then by the Sobolev Embedding Theorem,   $\left\{\gamma_n\right\}_{n \in\mathbb{N}}\subset W^{1,r}(\Omega).$
Let $\psi_n$ be the exponentially growing solutions to the Schr\"{o}dinger equation with potential $\gamma_n ^{-1/2}\Delta\gamma_n ^{1/2},$ and $u^n$ be the CGO solutions defined by Theorem \ref{thm:uexp} to the admittivity equation with admittivity $\gamma_n.$ Then for each $n\in \mathbb{N},$ the complex $\gamma$ version of \eqref{ie} holds for nonexceptional $k\in \mathbb{C}\setminus\left\{0\right\}$
\begin{equation}  \label{psi_N}
\psi_n(z,k)|_{\partial\Omega}=e^{ikz}|_{\partial\Omega}-\int_{\DOm}G_{k}(z-\zeta)(\Lambda_{\gamma_n}-\Lambda_{1})\psi_n(\zeta,k)dS(\zeta),
\end{equation}
where $\gamma_n=1$ in the neighborhood of $\partial\Omega$.

It follows by \eqref{sa} that for each complex number $k\neq0,$ and for each $n\in \mathbb{N}$
\begin{equation}  \label{pnun}
\frac{{\gamma_n}^{-1/2}(z)}{ik}\psi_n(z,k)=u^n(z,k) \rightarrow u_1(z,k) \hspace{0.3cm} \mbox{in} \hspace{0.4cm} H^{1/2}(\partial\Omega).
\end{equation}

We claim that for each $n,$ $u^n$ satisfies \eqref{bie2}. To see this, by \eqref{sa}, for $z\in \DOm$,
\begin{eqnarray}
\frac{e^{ikz}}{ik} &-& \int_{\DOm}G_{k}(z-\zeta)(\Lambda_{\gamma}-\Lambda_{1})u^n(\zeta,k)dS(\zeta) \nonumber\\
&=&\frac{e^{ikz}}{ik}-\int_{\DOm}G_{k}(z-\zeta)(\Lambda_{\gamma}-\Lambda_{1})\frac{\gamma_n ^{-1/2}(\zeta)}{ik} \psi_n(\zeta,k)dS(\zeta) \nonumber\\
&=&  \frac{\gamma_n ^{-1/2}(z)}{ik}\psi_n(z,k) \nonumber \\
&=&  u^n(z,k),\label{jen}
\end{eqnarray}
where we used the fact that  $\gamma_n =1$ in a neighborhood of $\partial\Omega$.  Thus, $u^n$ satisfies \eqref{bie2} for each $n\in \mathbb{N}.$

We know by Theorem~3.1 of  \cite{francini} that $M(z,k)$ depends continuously on $\gamma.$  From \eqref{pnun}, we can conclude that  
\begin{equation}
\label{gotit}
\int_{\DOm}G_{k}(z-\zeta)(\Lambda_{\gamma_n}-\Lambda_{1})u^n(\zeta,k)dS(\zeta) \mapsto \int_{\DOm}G_{k}(z-\zeta)(\Lambda_{\gamma}-\Lambda_{1})u_1(\zeta,k)dS(\zeta).
\end{equation}
Thus, by \eqref{pnun}, \eqref{jen}, and \eqref{gotit}, we have that $u_1(\cdot,k)|_{\partial\Omega}$ satisfies \eqref{bie2}.
The uniqueness of $u_1(\cdot,k)|_{\partial\Omega}$ follows by Theorem \ref{thm:uexp}.
\end{proof}

An analogous theorem holds for $u_2$.
\begin{theorem}  \label{thm:u2_BIE}  
Let $\gamma\in W^{2,p}(\Omega)$ for $p>1$ and suppose $\gamma=1$ in a neighborhood of $\partial\Omega.$ Suppose $\sigma$ and $\epsilon$ satisfy \eqref{eq-fran-1-5} and \eqref{eq-fran-1-7}, and let $\gamma(z) - 1$ have compact
support in $W ^{2,p}(\Omega).$  Then for any nonexceptional $k\in \mathbb{C}\setminus\left\{0\right\}$, the trace of the exponentially growing solution $u_2(\cdot,k)$ on $\partial\Omega$ is the unique solution to
\begin{equation}  \label{bie2a}
u_2(z,k)=\frac{e^{-ik\bar{z}}}{-ik}-\int_{\DOm}G_{k}(-\bar{z}+\bar{\zeta})(\Lambda_{\gamma}-\Lambda_{1})u_2(\zeta,k)dS(\zeta),\hspace{0.5 cm} z\in \partial\Omega.
\end{equation}
\end{theorem}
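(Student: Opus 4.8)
The plan is to mirror the proof of Theorem~\ref{thm:u1_BIE} for $u_1$, making the substitution $z \mapsto -\bar z$ throughout and using the second identity relating $u_2$ to $\psi_S$, namely \eqref{sa2}. First I would fix a nonexceptional $k \in \C\setminus\{0\}$ and, exactly as before, choose $1 < r < 2$ and $p > 2$ with $\frac{1}{p} = \frac{1}{r} - \frac{1}{2}$, and approximate $\gamma \in W^{1,p}(\Om)$ by a sequence $\{\gamma_n\} \subset W^{2,r}(\Om)$ with $\gamma_n = 1$ in a neighborhood of $\DOm$; by Sobolev embedding $\{\gamma_n\} \subset W^{1,r}(\Om)$ as well. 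Let $\psi_n$ be the exponentially growing solutions to the Schr\"odinger equation with potential $\gamma_n^{-1/2}\Delta\gamma_n^{1/2}$, and $u^n$ the {\sc CGO} solutions of Theorem~\ref{thm:uexp2} for the admittivity $\gamma_n$. The complex version of \eqref{ie} holds for $\psi_n$ for nonexceptional $k$.

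The key point is to rewrite \eqref{ie} under the reflection $\zeta \mapsto -\bar\zeta$. Since $G_k$ satisfies $G_k(-\bar z + \bar\zeta)$ is the appropriate reflected Faddeev kernel, and since \eqref{sa2} gives $-iku_2(z,k) = \gamma^{-1/2}(-\bar z)\psi_S(-\bar z,k)$, substituting $z \mapsto -\bar z$ and $\zeta \mapsto -\bar\zeta$ in the integral equation for $\psi_n$ and dividing by $-ik$ yields, for each $n$,
\begin{equation*}
u^n(z,k) = \frac{\gamma_n^{-1/2}(-\bar z)}{-ik}\psi_n(-\bar z,k) = \frac{e^{-ik\bar z}}{-ik} - \int_{\DOm} G_k(-\bar z + \bar\zeta)(\Lambda_{\gamma_n} - \Lambda_1)u^n(\zeta,k)\,dS(\zeta),
\end{equation*}
where we again use that $\gamma_n = 1$ near $\DOm$ so that $\Lambda_{\gamma_n}$ may be replaced by $\Lambda_\gamma$ against $u^n$ on the boundary, and that $\gamma_n^{-1/2}(-\bar\zeta) = 1$ on $\DOm$ so the reflected solution data agrees with the unreflected data. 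One must be mildly careful that the surface measure and the boundary trace behave correctly under $\zeta \mapsto -\bar\zeta$; since $\Om$ need not be symmetric under this reflection, the cleanest route is to observe that $u^n$ solves the admittivity equation in all of $\R^2$ (as $\gamma_n - 1$ has compact support), so the integral identity is really an identity on $\R^2$ restricted to $\DOm$, and the reflection is applied before restricting.

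Then I would pass to the limit $n \to \infty$. By Theorem~\ref{thm:uexp2} and the continuous dependence of $M(z,k)$ on $\gamma$ (Theorem~3.1 of \cite{francini}), together with \eqref{sa2}, we get $u^n(\cdot,k) \to u_2(\cdot,k)$ in $H^{1/2}(\DOm)$; combined with continuity of $\Lambda_{\gamma_n} \to \Lambda_\gamma$ and boundedness of the Faddeev single-layer operator on $H^{1/2}(\DOm)$, the integral term converges to $\int_{\DOm} G_k(-\bar z + \bar\zeta)(\Lambda_\gamma - \Lambda_1)u_2(\zeta,k)\,dS(\zeta)$. Hence $u_2(\cdot,k)|_{\DOm}$ satisfies \eqref{bie2a}, and uniqueness follows from the uniqueness part of Theorem~\ref{thm:uexp2}, precisely as in Theorem~\ref{thm:u1_BIE}. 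The main obstacle I anticipate is bookkeeping the reflection $z \mapsto -\bar z$ consistently in the Faddeev kernel and the boundary integral --- in particular verifying that \eqref{ie}, which is stated for real conductivities, carries over to the complex $\gamma_n$ and that the reflected kernel $G_k(-\bar z + \bar\zeta)$ is exactly what appears --- rather than any new analytic difficulty, since all the hard estimates are already encapsulated in the lemmas and in Theorem~\ref{thm:u1_BIE}.
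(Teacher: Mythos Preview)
Your proposal is correct and follows essentially the same route as the paper: approximate $\gamma$ by $\gamma_n\in W^{2,r}$, invoke \eqref{sa2} to identify $u^n(z,k)$ with $\frac{\gamma_n^{-1/2}(-\bar z)}{-ik}\psi_n(-\bar z,k)$, evaluate Nachman's boundary integral equation at $-\bar z$ and perform the change of variables $\zeta\mapsto -\bar\zeta$, then pass to the limit via continuous dependence of $M$ on $\gamma$ and conclude uniqueness from Theorem~\ref{thm:uexp2}. Your caution about the reflection $z\mapsto -\bar z$ not preserving $\partial\Omega$ in general is a fair point that the paper handles only implicitly (the integral identity for $\psi_n$ in fact holds for $z$ in the exterior region, not merely on $\partial\Omega$), but otherwise the argument is the same.
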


\begin{proof}  
Let $p,\;r,\;  \left\{\gamma_n \right\}_{n \in\textbf{N}}\subset W^{2,r}(\Omega),$ and $\psi_n$  be as in the proof of Theorem \ref{thm:u1_BIE}.  Let $u^n$ be the CGO solutions defined in Theorem \ref{thm:uexp2}  to the admittivity equation with admittivity $\gamma_n.$ Then for each $n\in \mathbb{N}$, for nonexceptional $k\in \mathbb{C}\setminus\left\{0\right\}$, evaluating \eqref{psi_N} at $-\bar{z}$,
\begin{equation} \label{psi_BIE_eval}
\psi_n(-\bar{z},k)|_{\partial\Omega}=e^{-ik\bar{z}}|_{\partial\Omega}-\int_{\DOm}G_{k}(-\bar{z}-\zeta)(\Lambda_{\gamma_n}-\Lambda_{1})\psi_n(\zeta,k)dS(\zeta),
\end{equation}
where $\gamma_n=1$ in a neighborhood of $\partial\Omega$.

It follows by \eqref{sa2} that for each complex number $k\neq0,$ and for each $n\in \mathbb{N}$
\begin{equation} \label{pnun2}
\frac{{\gamma_n}^{-1/2}(-\bar{z})}{-ik}\psi_n(-\bar{z},k)=u^n(z,k) \rightarrow u_2(z,k) \hspace{0.3cm} \mbox{in} \hspace{0.4cm} H^{1/2}(\partial\Omega).
\end{equation}

We claim that for each $n,$ $u^n$ satisfies \eqref{bie2a}. To see this, by \eqref{sa2}, for $z\in \DOm$,
\begin{eqnarray}
\frac{e^{-ik\bar{z}}}{-ik} &-& \int_{\DOm}G_{k}(-\bar{z}+\overline{\zeta})(\Lambda_{\gamma}-\Lambda_{1})u^n(\zeta,k)dS(\zeta) \nonumber\\
&=&\frac{e^{-ik\bar{z}}}{-ik}-\int_{\DOm}G_{k}(-\bar{z}+\overline{\zeta})(\Lambda_{\gamma}-\Lambda_{1})\frac{\gamma_n ^{-1/2}(-\overline{\zeta})}{-ik} \psi_n(-\overline{\zeta},k)dS(\zeta) \nonumber\\
&=&\frac{e^{-ik\bar{z}}}{-ik}-\int_{\DOm}G_{k}(-\bar{z}-\tilde{\zeta})(\Lambda_{\gamma}-\Lambda_{1})\frac{\gamma_n ^{-1/2}(\tilde{\zeta})}{-ik} \psi_n(\tilde{\zeta},k)dS(\tilde{\zeta}) \nonumber\\
&=&  \frac{\gamma_n ^{-1/2}(-\bar{z})}{-ik}\psi_n(-\bar{z},k) \nonumber \\
&=&  u^n(z,k),\label{jen2}
\end{eqnarray}
using the change of variables $-\bar{\zeta}\mapsto\tilde{\zeta}$ and the fact that  $\gamma_n =1$ in a neighborhood of $\partial\Omega$.  Thus, $u^n$ satisfies \eqref{bie2a} for each $n\in \mathbb{N}.$

We know by Theorem~3.1 of  \cite{francini} that $M(z,k)$ depends continuously on $\gamma.$  From \eqref{pnun2}, we can conclude that  
\begin{equation}
\label{gotit2}
\int_{\DOm}G_{k}(-\bar{z}+\bar{\zeta})(\Lambda_{\gamma_n}-\Lambda_{1})u^n(\zeta,k)dS(\zeta) \mapsto \int_{\DOm}G_{k}(-\bar{z}+\bar{\zeta})(\Lambda_{\gamma}-\Lambda_{1})u_2(\zeta,k)dS(\zeta).
\end{equation}
Thus, by \eqref{pnun2}, \eqref{jen2}, and \eqref{gotit2}, we have that $u_2(\cdot,k)|_{\partial\Omega}$ satisfies \eqref{bie2a}.
The uniqueness of $u_2(\cdot,k)|_{\partial\Omega}$ follows by Theorem \ref{thm:uexp2}.
\end{proof}

\subsection{The Scattering Transform Matrix}
The scattering transform $S_{\gamma}(k)$ of the matrix potential $Q_{\gamma}$ is defined in \cite{francini} by
\begin{equation}
\label{matrixg}
S_{\gamma}(k)=\frac {i} {\pi}\int_{\R^2} \begin{pmatrix} 0 & Q_{12}(z)e(z,-\bar{k})M_{22}(z,k)   \\  -Q_{21}(z) e(z,k)M_{11}(z,k)  & 0 \end{pmatrix} d\mu(z),
\end{equation}
where $e(z,k)=\exp\{i(zk +\bar{z}\bar{k})\}$.  Thus we are only concerned with computing the off-diagonal entries of $S_{\gamma}$, which we will denote by $S_{12}(k)$ and $S_{21}(k)$.

Boundary integral formulas for the off-diagonal entries of $S_{\gamma}(k)$ in \eqref{matrixg} can be computed by integration by parts as follows
\begin{eqnarray} \nonumber
S_{12}(k) &=& \frac{i}{\pi}\int_{\Omega}Q_{12}(z)e^{-i\bar{k}z}\Psi_{22}(z,k)\;d\mu(z) \\
&=& \frac{i}{\pi}\int_{\Omega}e^{-i\bar{k}z}\dbarz\Psi_{12}(z,k)\;d\mu(z)  \nonumber\\
&=& \frac{i}{2\pi}\int_{\DOm}e^{-i\bar{k}z}\Psi_{12}(z,k) (\nu_1+i\nu_2)\;dS(z)- \frac{i}{\pi}\int_{\Omega}\dbarz e^{-i\bar{k}z} \Psi_{12}(z,k)\;d\mu(z)  \nonumber \\
&=& \frac{i}{2\pi}\int_{\DOm}e^{-i\bar{k}z}\Psi_{12}(z,k) (\nu_1+i\nu_2)\;dS(z) \label{Parts_BIE_for_S12}
\end{eqnarray}
and similarly,
\begin{eqnarray} \label{Parts_BIE_for_S21}
S_{21}(k) = -\frac{i}{2\pi}\int_{\DOm}e^{i\bar{k}\bar{z}}\Psi_{21}(z,k) (\nu_1-i\nu_2)\;dS(z),
\end{eqnarray}
where $\nu=\nu_1+i\nu_2$ denotes the outward unit normal to the boundary $\DOm$.
\begin{theorem}\label{thm-Psi-BIE-Sarah}
The trace of the exponentially growing solutions $\Psi_{12}(z,k)$ and $\Psi_{21}(z,k)$ for $k\in\C\setminus\{0\}$ can be determined by
\begin{eqnarray}
\Psi_{12}(z,k) &=& \int_{\DOm}\frac{e^{i\bar{k}(z-\zeta)}}{4\pi(z-\zeta)}\left[\Lambda_\gamma-\Lambda_1\right]u_2(\zeta,k)\;dS(\zeta)\label{eq-Psi-12-bndry-USE-THIS}\\
\Psi_{21}(z,k) &=& \int_{\DOm}\overline{\left[\frac{e^{i k(z-\zeta)}}{4\pi(z-\zeta)}\right]}\left[\Lambda_\gamma-\Lambda_1\right]u_1(\zeta,k)\;dS(\zeta), \label{eq-Psi-21-bndry-USE-THIS}
\end{eqnarray}
where $u_1$ and $u_2$ are calculated via equations~\eqref{bie2} and \eqref{bie2a} respectively.
\end{theorem}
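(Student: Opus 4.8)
The plan is to derive \eqref{eq-Psi-12-bndry-USE-THIS} and \eqref{eq-Psi-21-bndry-USE-THIS} by combining the relation \eqref{e10} between $\Psi_{12}$ and $\dez u_2$ (respectively \eqref{e8} for $\Psi_{21}$ and $\dbarz u_1$) with the boundary integral equations for $u_2$ and $u_1$ proved in Theorems~\ref{thm:u2_BIE} and \ref{thm:u1_BIE}. The key observation is that in a neighborhood of $\partial\Omega$ we have $\gamma\equiv 1$, so there $\gamma^{1/2}\equiv 1$ and \eqref{e10} reads simply $\Psi_{12}(z,k)=\dez u_2(z,k)$ on $\partial\Omega$. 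Thus the strategy is: start from the boundary integral equation \eqref{bie2a} for $u_2(z,k)$, which is valid in a full neighborhood of $\partial\Omega$ (since $G_k(z-\zeta)$ is defined for $z\notin\partial\Omega$ and both sides extend smoothly across the boundary because $\gamma=1$ there), differentiate both sides with respect to $z$, and then restrict to $\partial\Omega$.

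First I would differentiate the free term: $\dez\left(\frac{e^{-ik\bar z}}{-ik}\right)=0$, since $e^{-ik\bar z}$ is anti-holomorphic in $z$. This is exactly why only a single convolution term survives in the final formula. Next I would differentiate the integral term $\int_{\DOm}G_k(-\bar z+\bar\zeta)(\Lambda_\gamma-\Lambda_1)u_2(\zeta,k)\,dS(\zeta)$ under the integral sign with respect to $z$. Here I need the identity $\dez G_k(-\bar z+\bar\zeta)$. Recalling $G_k(z)=e^{ik z}g_k(z)$ with $(-\Delta-4ik\dbar)g_k=\delta$, the standard computation (as in \cite{nachman}, used for the Schr\"odinger CGOs) gives $\dez g_k(z)=\frac{1}{4\pi z}$ up to the smooth correction, and more precisely $\dez G_k(z)=\frac{e^{ikz}}{4\pi z}$ modulo terms that vanish; evaluated at the anti-holomorphic-in-$z$ argument $-\bar z+\bar\zeta$ one obtains, after the chain rule $\dez f(-\bar z)= -\overline{\dbar_{\bar z}\bar f}$ type bookkeeping, precisely the kernel $\frac{e^{i\bar k(z-\zeta)}}{4\pi(z-\zeta)}$ appearing in \eqref{eq-Psi-12-bndry-USE-THIS}. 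Assembling these pieces and using $\Psi_{12}=\dez u_2$ on $\partial\Omega$ yields \eqref{eq-Psi-12-bndry-USE-THIS}. The derivation of \eqref{eq-Psi-21-bndry-USE-THIS} is entirely analogous: differentiate \eqref{bie2} for $u_1$ with $\dbarz$, note $\dbarz(e^{ikz}/ik)=0$, use $\Psi_{21}=\dbarz u_1$ on $\partial\Omega$ from \eqref{e8}, and track complex conjugates to land on the conjugated kernel $\overline{[e^{ik(z-\zeta)}/(4\pi(z-\zeta))]}$.

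The main obstacle I anticipate is the justification of differentiating under the integral sign and, relatedly, the precise form of the kernel $\dez G_k$ near the diagonal $z=\zeta$. The Faddeev Green's function has a logarithmic-type singularity, so $\dez G_k(z-\zeta)$ has a Cauchy-kernel $1/(z-\zeta)$ singularity and the resulting boundary integral must be interpreted appropriately; one should either argue that $z$ stays in the neighborhood where $\gamma=1$ and approach $\partial\Omega$ from a side where the kernel is smooth, taking limits, or invoke the mapping properties of the single-layer-type operator with the Faddeev kernel (as established in \cite{nachman}) to see that the operation is legitimate on $H^{1/2}(\partial\Omega)$. A secondary technical point is the careful bookkeeping of the substitution $\zeta\mapsto -\bar\zeta$ and the conjugations, already rehearsed in the proof of Theorem~\ref{thm:u2_BIE} (see \eqref{jen2}), so this should be routine once the kernel identity is in hand.
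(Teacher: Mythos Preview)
Your proposal is correct and follows essentially the same route as the paper: use \eqref{e8}, \eqref{e10} together with $\gamma\equiv 1$ near $\partial\Omega$ to write $\Psi_{12}=\dez u_2$, $\Psi_{21}=\dbarz u_1$ on the boundary, apply $\dez$ (resp.\ $\dbarz$) to the boundary integral equations \eqref{bie2a} and \eqref{bie2}, kill the free term, and identify the kernel. The only difference is that the paper computes $\dez G_k(-\bar z+\bar\zeta)$ and $\dbarz G_k(z-\zeta)$ explicitly from the Fourier representation \eqref{g2} of $g_k$ (a direct contour/substitution calculation giving $-\frac{e^{i\bar k(z-\zeta)}}{4\pi(z-\zeta)}$ and its conjugate), whereas you sketch the result and cite \cite{nachman}; the paper, like you, does not dwell on the justification of differentiating under the integral or on the principal-value interpretation of the Cauchy-type kernel.
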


\begin{proof}
We use the relations in \eqref{e8} and \eqref{e10} to obtain boundary integral equations for $\Psi_{21}$ and $\Psi_{12}$ for $z\in\DOm$ from Equations~\eqref{bie2} and \eqref{bie2a}, respectively.  Let us begin with $\Psi_{12}$:
\begin{eqnarray}
\Psi_{12}(z,k)&=&\gamma^{1/2}(z)\dez u_2(z,k)\nonumber\\
&=&\gamma^{1/2}(z)\dez\left[\frac{e^{-i k\bar{z}}}{-ik}-\int_{\DOm} G_k(-\bar{z}+\bar{\zeta})\left[\Lambda_\gamma-\Lambda_1\right]u_2(\zeta,k)\;dS(\zeta)\right]\nonumber\\
&=&-\gamma^{1/2}(z)\int_{\DOm} \dez\left[G_k(-\bar{z}+\bar{\zeta})\right]\left[\Lambda_\gamma-\Lambda_1\right]u_2(\zeta,k)\;dS(\zeta).\label{eq-Psi-12-bndry-1st-form}
\end{eqnarray}
Similarly,
\begin{eqnarray}
\Psi_{21}(z,k)&=&\gamma^{1/2}(z)\dbarz u_1(z,k)\nonumber\\
&=&\gamma^{1/2}(z)\dbarz\left[\frac{e^{i kz}}{ik}-\int_{\DOm} G_k(z-\zeta)\left[\Lambda_\gamma-\Lambda_1\right]u_1(\zeta,k)\;dS(\zeta)\right]\nonumber\\
&=&-\gamma^{1/2}(z)\int_{\DOm} \dbarz\left[G_k(z-\zeta)\right]\left[\Lambda_\gamma-\Lambda_1\right]u_1(\zeta,k)\;dS(\zeta).\label{eq-Psi-21-bndry-1st-form}
\end{eqnarray}

A thorough study of the properties of the Faddeev Green's function $G_k$ and its derivatives is given in \cite{Samuli_thesis99}.  The calculations for the specific derivatives needed here are shown below.  By the definition of $G_k$ \eqref{g1}
\begin{equation}\label{eq-dez-Gk-1}
\dez G_k(-\bar{z}+\bar{\zeta})= \dez\left[e^{ik(-\bar{z}+\bar{\zeta})}g_k(-\bar{z}+\bar{\zeta})\right]= e^{ik(-\bar{z}+\bar{\zeta})}\dez g_k(-\bar{z}+\bar{\zeta}).
\end{equation}
Using the definition of $g_k$ \eqref{g2},
\begin{eqnarray}
\dez g_k(-\bar{z}+\bar{\zeta})
&=&\dez\frac{1}{(2\pi)^2}\int_{\R^2} \frac{e^{i(-\bar{z}+\bar{\zeta})\cdot\xi}}{\xi(\bar{\xi}+2k)}\;d\xi\nonumber\\
&=&\frac{1}{(2\pi)^2}\int_{\R^2} \frac{\dez\left[e^{-(i/2)(\bar{z}\bar{\xi}+z\xi)}\right]e^{i\bar{\zeta}\cdot\xi}}{\xi(\bar{\xi}+2k)}\;d\xi\nonumber\\
&=&\frac{1}{(2\pi)^2}\int_{\R^2} \frac{(-i\xi/2)\;e^{i(-\bar{z}+\bar{\zeta})\cdot\xi}}{\xi(\bar{\xi}+2k)}\;d\xi\nonumber\\
&=&\frac{e^{i(-\bar{z}+\bar{\zeta})\cdot(-2\bar{k})}}{4}\frac{1}{(2\pi)^2}\int_{\R^2} \frac{2 e^{i(-\bar{z}+\bar{\zeta})\cdot(\xi+2\bar{k})}}{i\left(\overline{\xi+2\bar{k}}\right)}\;d\xi\nonumber\\
&=&\frac{e^{i(-\bar{z}+\bar{\zeta})\cdot(-2\bar{k})}}{4\pi\left(\overline{-\bar{z}+\bar{\zeta}}\right)}\nonumber\\
&=&-\frac{e^{-ik(-\bar{z}+\bar{\zeta})}e^{-i\bar{k}(-z+\zeta)}}{4\pi(z-\zeta)}\label{eq-dez-gk}
\end{eqnarray}
by the definition of the inverse Fourier transform and the well known result 
\[\mathcal{F}^{-1}\left\{\frac{2}{i\bar{\xi}}\right\}^\wedge(z)=\frac{1}{\pi\bar{z}}.\]
Therefore, by \eqref{eq-dez-Gk-1} and \eqref{eq-dez-gk}
\begin{equation}\label{eq-dez-Gk}
\dez G_k(-\bar{z}+\bar{\zeta})= -\frac{e^{i\bar{k}(z-\zeta)}}{4\pi(z-\zeta)}.
\end{equation}
The $\dbarz$ derivative for $\Psi_{21}$ is calculated in a similar manner,
\begin{equation}\label{eq-dbarz-Gk}
\dbarz G_k(z-\zeta)= -\overline{\left[\frac{e^{i k(z-\zeta)}}{4\pi(z-\zeta)}\right]}.
\end{equation}
%
Substituting the representations for $\dez G_k(-\bar{z}+\bar{\zeta})$ and $\dbarz G_k(z-\zeta)$, given in \eqref{eq-dez-Gk} and \eqref{eq-dbarz-Gk}, back into the equations for $\Psi_{12}$ and $\Psi_{21}$, given in \eqref{eq-Psi-12-bndry-1st-form} and \eqref{eq-Psi-21-bndry-1st-form} respectively, proves the theorem.
\end{proof}

\subsection{From $S(k)$ to $M$}
The dependence of $M$ on the complex parameter $k$ is related to the scattering transform through the following $\dbar_k$ system.
\begin{theorem}[Theorem 4.1 \cite{francini}]\label{thm-fran-4-1}
Let $\sigma$ and $\epsilon$ satisfy \eqref{eq-fran-1-5} and \eqref{eq-fran-1-7} and let $M$ be the unique solution to \eqref{Dbar_z_for_M} satisfying \eqref{Mcondition}.  The map $k\to M(\cdot,k)$ is differentiable as a map into $L^r_{-\beta}$, and satisfies the equation
\begin{equation}\label{eq-dbar-k}
\dbar_k M(z,k)=M(z,\bar{k})\left(\begin{array}{cc}
e(z,\bar{k}) & 0\\
0 & e(z,-k)\\
\end{array}\right) S_\gamma(k),
\end{equation}
where
\begin{equation}\label{eq-fran-4-1}
S_\gamma(k)=\frac{i}{\pi}\int_{\R^2}\left(\begin{array}{cc}
e(z,-\bar{k}) & 0\\
0 & -e(z,k)\\
\end{array}\right)(Q_\gamma M)^\text{off}(z,k)\;d\mu(z).
\end{equation}
Moreover, for every $p>2$,
\[\underset{k}{\sup}\left\|M(z,\cdot)-I\right\|_{L^p(\R^2)}\leq K_2,\]
where $K_2$ depends on $\beta$, $\sigma_0$, $\Omega$, and $p$.
\end{theorem}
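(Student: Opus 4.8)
The plan is to establish \eqref{eq-dbar-k} by the standard route for $\dbar_k$-equations (as in Beals--Coifman, Nachman \cite{nachman}, and Knudsen \cite{knudsen}), now adapted to the first order system. I would show that both sides of \eqref{eq-dbar-k} solve, in the $z$ variable, the \emph{homogeneous} system $(D_k-Q_\gamma)[\,\cdot\,]=0$, that they have the same non-decaying asymptotics as $|z|\to\infty$, and hence that their difference lies in $L^r$ and solves the homogeneous system, so must vanish by the uniqueness half of the Fredholm theory for that system (Theorem~3.1 of \cite{francini}). The two ingredients required are (i) the differentiability of $k\mapsto M(\cdot,k)$ as a map into the weighted space $L^r_{-\beta}$, and (ii) the explicit integral-equation form of \eqref{M_eqs}--\eqref{Mcondition} built from the fundamental solutions $\tfrac{1}{\pi z}$, $\tfrac{1}{\pi\bar z}$, $\tfrac{e(z,-k)}{\pi\bar z}$, $\tfrac{e(z,\bar k)}{\pi z}$ of $\dbarz$, $\dez$, $\dez+ik$, $\dbarz-ik$ respectively (Lemma~\ref{N}).

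For the right-hand side I would set $R(z,k):=M(z,\bar k)\,\mathrm{diag}\!\big(e(z,\bar k),e(z,-k)\big)\,S_\gamma(k)$ and verify $(D_k-Q_\gamma)R=0$ by a product-rule computation using $\dez e(z,\lambda)=i\lambda\,e(z,\lambda)$ and $\dbarz e(z,\lambda)=i\bar\lambda\,e(z,\lambda)$. Each column of $M(\cdot,\bar k)$ solves the corresponding column equations of \eqref{M_eqs} at parameter $\bar k$; multiplication by the diagonal exponential turns these into column equations at parameter $k$ with the two columns interchanged, and the subsequent multiplication by the off-diagonal matrix $S_\gamma(k)$ restores the ordering, so that the two columns of $R$ satisfy the first and second column equations of \eqref{M_eqs} at parameter $k$. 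Since $M(\cdot,\bar k)-I\in L^r$ and $S_\gamma(k)$ is a constant matrix, $R(z,k)-\mathrm{diag}(e(z,\bar k),e(z,-k))S_\gamma(k)\in L^r$; that is, the non-decaying part of $R$ is precisely $\mathrm{diag}(e(z,\bar k),e(z,-k))S_\gamma(k)$.

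For the left-hand side: since $D_k-Q_\gamma$ depends on $k$ only through the term $-ik\,\mathrm{diag}(1,-1)(\cdot)^{\mathrm{off}}$ and $\dbark k=0$, the operator $\dbark$ commutes with $D_k-Q_\gamma$, so applying it to \eqref{Dbar_z_for_M} gives $(D_k-Q_\gamma)\dbark M=0$; thus $\dbark M$ solves the same homogeneous system. To pin down its non-decaying part I would differentiate the integral equation of (ii) in $\bar k$: the kernels $\tfrac{1}{\pi z}$ and $\tfrac{1}{\pi\bar z}$ contribute nothing, whereas $\dbark\big(\tfrac{e(z-\zeta,-k)}{\pi(\bar z-\bar\zeta)}\big)=-\tfrac{i}{\pi}e(z-\zeta,-k)=-\tfrac{i}{\pi}e(z,-k)e(\zeta,k)$ and $\dbark\big(\tfrac{e(z-\zeta,\bar k)}{\pi(z-\zeta)}\big)=\tfrac{i}{\pi}e(z,\bar k)e(\zeta,-\bar k)$ — the singularities disappear and the kernels factor into a product of an exponential in $z$ and one in $\zeta$. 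Feeding these back, the only non-decaying contribution to $\dbark M$ is $\mathrm{diag}(e(z,\bar k),e(z,-k))S_\gamma(k)$ with $S_\gamma$ exactly as in \eqref{eq-fran-4-1} (the interior exponentials $e(\zeta,k)$ and $e(\zeta,-\bar k)$ and the prefactor $\tfrac{i}{\pi}$ are precisely what appear there), while all remaining terms are $(\dez+ik)^{-1}$ or $(\dbarz-ik)^{-1}$ applied to compactly supported $L^s$ functions and so lie in $L^r$. Hence $\dbark M-R\in L^r$ solves $(D_k-Q_\gamma)[\,\cdot\,]=0$, and by the uniqueness in Theorem~3.1 of \cite{francini} it vanishes, which is \eqref{eq-dbar-k} together with \eqref{eq-fran-4-1}. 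The bound $\sup_k\|M(\cdot,k)-I\|_{L^p}\le K_2$ for every $p>2$ is then the uniform-in-$k$ Fredholm estimate underlying the construction of $M$: the operator norm of the associated integral operator $\mathcal K_k$ is small for large $|k|$ by the oscillatory decay of its exponential kernels and is bounded on compact sets of $k$ because the singular kernels are locally integrable, so $\mathrm{Id}-\mathcal K_k$ is boundedly invertible uniformly in $k$.

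The step I expect to be the main obstacle is ingredient (i): proving that $k\mapsto M(\cdot,k)$ is genuinely differentiable into $L^r_{-\beta}$ and that the integral equation may be differentiated term by term. This is where the weighted space is forced (the $O(1)$ oscillatory terms $e(z,\bar k)S_{12}$ and $e(z,-k)S_{21}$ must belong to it), it requires $k\mapsto\mathcal K_k$ to be a differentiable — indeed real-analytic for $k\neq 0$ — operator family with $\mathrm{Id}-\mathcal K_k$ boundedly invertible uniformly in $k$, and it requires controlling the degeneration of the exponential Green's kernels as $k\to 0$. All of this is provided by the mapping properties and uniform estimates of Theorem~3.1 of \cite{francini} (and the parallel analysis of \cite{brownuhlmann,knudsen}), which I would invoke rather than reprove.
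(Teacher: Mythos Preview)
The paper does not prove this theorem; it is quoted verbatim as Theorem~4.1 of \cite{francini} and used as a black box, so there is no proof in the paper to compare your proposal against. Your sketch is the standard Beals--Coifman/Brown--Uhlmann route that Francini herself follows, and the main steps you outline (commuting $\dbark$ through $D_k-Q_\gamma$, differentiating the integral equation, extracting the rank-one exponential terms that define $S_\gamma$, and invoking the $L^r$ uniqueness of Theorem~3.1 in \cite{francini}) are the right ones. Since the present paper offers nothing beyond the citation, your write-up is already more detailed than what appears here; if you want a line-by-line comparison you would need to consult \cite{francini} directly.
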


Notice that Equation \eqref{eq-dbar-k} can be written as the following two systems of equations:
\begin{equation}\label{eq-dbark-system-1}
\left\{\begin{array}{rc}
\dbar_k M_{11}(z,k)=&M_{12}(z,\bar{k})e(z,-k)S_{21}(k)\\
\dbar_k M_{12}(z,k) =&M_{11}(z,\bar{k})e(z,\bar{k})S_{12}(k)\\
\end{array}\right.,
\end{equation}
and
\begin{equation}\label{eq-dbark-system-2}
\left\{\begin{array}{rc}
\dbar_k M_{21}(z,k)=&M_{22}(z,\bar{k})e(z,-k)S_{21}(k)\\
\dbar_k M_{22}(z,k) =&M_{21}(z,\bar{k})e(z,\bar{k})S_{12}(k)\\
\end{array}\right.,
\end{equation}
included for the reader's convenience.

\subsection{From $M$ to $\gamma$}
\begin{theorem} [Theorem 6.2 \cite{francini}]
For any $\rho>0$,
\begin{equation}\label{eq-M-to-Q-francini}
Q_{\gamma}(z)=\lim_{k_0 \to\infty}\mu(B_\rho (0))^{-1}\int_{\left\{k:\left|k-k_0\right|<\rho\right\}} D_k M(z,k)d\mu(k).
\end{equation}
\end{theorem}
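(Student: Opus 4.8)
The plan is to read off $Q_\gamma(z)$ from the large-$|k|$ behavior of $M$ by exploiting the $z$-equation \eqref{Dbar_z_for_M}. The key observation is that \eqref{Dbar_z_for_M} can be rewritten as $D_kM(z,k)=Q_\gamma(z)M(z,k)$, valid for a.e.\ $z\in\R^2$ and every $k\in\C\setminus\{0\}$ (this is precisely the system \eqref{M_eqs}). Since $\gamma-1$ is compactly supported and $\gamma\in W^{1,\infty}$ is bounded below, the entries $Q_{12}=-\tfrac12\partial_z\log\gamma$ and $Q_{21}=-\tfrac12\dbar_z\log\gamma$ of $Q_\gamma$ are bounded with compact support, so $Q_\gamma\in L^\infty(\R^2)$ with compact support; combined with $M(\cdot,k)-I\in L^p(\R^2)$ from Theorem~\ref{thm-fran-4-1}, the matrix $D_kM(\cdot,k)=Q_\gamma M(\cdot,k)$ is a genuine $L^p(\R^2)$ function for each fixed $k$, so the integral in \eqref{eq-M-to-Q-francini} makes sense (for instance as an $L^p_{\mathrm{loc}}$-valued integral in $z$).

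With this identity in hand the argument is short. Because $Q_\gamma(z)$ does not depend on $k$, integrating $D_kM(z,k)=Q_\gamma(z)M(z,k)$ over $B_\rho(k_0):=\{k:|k-k_0|<\rho\}$ and factoring out $Q_\gamma(z)$ gives
\begin{equation*}
\mu(B_\rho(0))^{-1}\int_{B_\rho(k_0)}D_kM(z,k)\,d\mu(k)=Q_\gamma(z)\left[\mu(B_\rho(0))^{-1}\int_{B_\rho(k_0)}M(z,k)\,d\mu(k)\right].
\end{equation*}
So the theorem reduces to the claim that the averaged matrix in brackets tends to the identity $I$ as $k_0\to\infty$. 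Splitting $M=I+(M-I)$, the average of the constant $I$ is $I$, and I would show that the average of $M(z,\cdot)-I$ over the receding disc $B_\rho(k_0)$ vanishes: by H\"older it is bounded by $\mu(B_\rho(0))^{-1/q}\,\|M(z,\cdot)-I\|_{L^q(B_\rho(k_0))}$ for a suitable $q$, and the $L^q$ mass of $M(z,\cdot)-I$ over a disc escaping to infinity tends to $0$. For this one invokes the $k$-asymptotics of $M$ from \cite{francini}: the estimates underlying Theorem~\ref{thm-fran-4-1} together with the $\dbar_k$ equation \eqref{eq-dbar-k} (and the decay of the scattering data) give $M(z,k)\to I$ as $|k|\to\infty$, so in particular $\|M(z,\cdot)-I\|_{L^q(B_\rho(k_0))}\to 0$. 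Combining the last display with this limit yields \eqref{eq-M-to-Q-francini}.

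I expect the genuine work to be entirely in that last step: quantifying how fast, and in what topology in $z$, $M(z,\cdot)-I$ decays at $k=\infty$, so that the disc average of $M$ converges to $I$ in the sense required to make \eqref{eq-M-to-Q-francini} a pointwise-in-$z$ (or $L^p_{\mathrm{loc}}$-in-$z$) identity. This is exactly what is established in Francini's large-$k$ analysis, her Theorem~4.1 and the surrounding estimates, reproduced here as Theorem~\ref{thm-fran-4-1}; everything else is the elementary manipulation of the $z$-equation above. A secondary point to handle carefully is the interchange of the $k$-integral with the a.e.-$z$ defined matrix product $Q_\gamma(z)M(z,k)$, which is justified by the $L^p$ bound on $M(\cdot,k)-I$ that is uniform in $k$.
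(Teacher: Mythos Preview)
The paper does not prove this statement; it is quoted as Theorem~6.2 of \cite{francini} and then immediately superseded for computational purposes by Theorem~\ref{thm-M-to-Q-Sarah}. Your sketch is the natural argument and is essentially what one finds in \cite{francini}: rewrite \eqref{Dbar_z_for_M} as $D_kM=Q_\gamma M$, average over $B_\rho(k_0)$, factor out the $k$-independent $Q_\gamma(z)$, and use the $L^p$-in-$k$ control on $M(z,\cdot)-I$ supplied by Theorem~\ref{thm-fran-4-1} to conclude that the disc average of $M(z,\cdot)$ tends to $I$. One minor clean-up: you do not actually need the pointwise limit $M(z,k)\to I$ as $|k|\to\infty$; the membership $M(z,\cdot)-I\in L^p(\R^2)$ from Theorem~\ref{thm-fran-4-1} already yields $\|M(z,\cdot)-I\|_{L^p(B_\rho(k_0))}\to 0$ by absolute continuity of the integral, and H\"older then bounds the average directly.
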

This provides a reconstruction formula for the entries of $Q_{\gamma}$, and one can recover $\gamma$ from $Q_{12}=-\frac{1}{2}\dez\log(\gamma)$ or $Q_{21}=-\frac{1}{2}\dbarz\log(\gamma)$.  However, this formula is computationally impractical as it requires a large $k$ limit of integrals involving $\dbarz$ and $\dez$ derivatives of $M(z,k)$.

We have derived computationally advantageous formulas for recovering the entries of $Q_\gamma$ that only require knowledge of the {\sc CGO} solutions at $k=0$.  Theorem~\ref{thm-M-to-Q-Sarah} provides this direct relation between the {\sc CGO} solutions $M(z,0)$ (from the $\dbar_k$ equation \eqref{eq-dbar-k}) and the matrix potential $Q_\gamma(z)$, eliminating the large $k$ limit required in equation \eqref{eq-M-to-Q-francini} above.

\begin{theorem}\label{thm-M-to-Q-Sarah}
The entries of the potential matrix $Q_\gamma(z)$ defined in \eqref{matrixim} can be calculated using only knowledge of the {\sc CGO} solutions $M(z,0)$ via
\begin{eqnarray}
Q_{12}(z)&=&\frac{\dbarz M_+(Q_\gamma,z,0)}{M_-(Q_\gamma,z,0)}\label{eq-M-to-Q12}\\
Q_{21}(z)&=&\frac{\dez M_-(Q_\gamma,z,0)}{M_+(Q_\gamma,z,0)}\label{eq-M-to-Q21}.
\end{eqnarray}
where,
\begin{eqnarray}
M_+(Q_\gamma,z,k)&=& M_{11}(Q_\gamma,z,k)+e(z,-k)M_{12}(Q_\gamma,z,k)\label{eq-M+-thm}\\
M_-(Q_\gamma,z,k)&=& M_{22}(Q_\gamma,z,k)+e(z,k)M_{21}(Q_\gamma,z,k)\label{eq-M--thm}.
\end{eqnarray}
\end{theorem}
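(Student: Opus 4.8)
The plan is to reduce the claim to the first-order $\dbarz$-system \eqref{M_eqs} evaluated at $k=0$, where the oscillatory factors $e(z,\pm k)$ become trivial and the potential entries can be read off after forming the combinations $M_\pm$. First I would record that $e(z,0)=e(z,-0)=1$, so that the auxiliary functions \eqref{eq-M+-thm}--\eqref{eq-M--thm} at $k=0$ are simply
\[
M_+(Q_\gamma,z,0)=M_{11}(z,0)+M_{12}(z,0),\qquad M_-(Q_\gamma,z,0)=M_{22}(z,0)+M_{21}(z,0).
\]

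Next I would specialize \eqref{M_eqs} to $k=0$, obtaining the four scalar identities $\dbarz M_{11}=Q_{12}M_{21}$, $\dbarz M_{12}=Q_{12}M_{22}$, $\dez M_{21}=Q_{21}M_{11}$, $\dez M_{22}=Q_{21}M_{12}$, all evaluated at $k=0$. Adding the first two and using that $\dbarz$ is linear gives
\[
\dbarz M_+(\cdot,0)=Q_{12}\bigl(M_{21}(\cdot,0)+M_{22}(\cdot,0)\bigr)=Q_{12}\,M_-(\cdot,0),
\]
while adding the last two gives
\[
\dez M_-(\cdot,0)=Q_{21}\bigl(M_{11}(\cdot,0)+M_{12}(\cdot,0)\bigr)=Q_{21}\,M_+(\cdot,0).
\]
Dividing the first relation by $M_-(\cdot,0)$ and the second by $M_+(\cdot,0)$ yields precisely \eqref{eq-M-to-Q12} and \eqref{eq-M-to-Q21}. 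One can reach the same pair of identities by adding \eqref{e7} to \eqref{e10} and \eqref{e8} to \eqref{e9}, which rewrites $M_\pm(z,0)$ as $\gamma^{1/2}(z)$ times the complex derivatives of the single admittivity-equation solution $u_1+u_2$; I would nonetheless keep the purely algebraic derivation from \eqref{M_eqs}, since it avoids the delicate $k\to0$ behaviour of the individual normalizations of $u_1$ and $u_2$.

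The step I expect to be the real obstacle is justifying the two divisions, i.e.\ proving that $M_-(z,0)$ and $M_+(z,0)$ vanish nowhere. Here I would note that, by the two identities just derived, the pair $(M_+(\cdot,0),M_-(\cdot,0))$ solves the coupled Bers--Vekua-type system $\dbarz M_+=Q_{12}M_-$, $\dez M_-=Q_{21}M_+$, with $M_\pm-1\in L^p(\R^2)$ for $p>2$ (inherited from \eqref{Mcondition} and the $L^p$ bounds of Theorem~\ref{thm-fran-4-1}, since the off-diagonal entries of $M$ and the diagonal of $M-I$ lie in $L^p$). For $\gamma\equiv1$ we have $Q_\gamma\equiv0$, hence $M\equiv I$ and $M_\pm\equiv1$, which by the continuous dependence of $M$ on $\gamma$ (Theorem~3.1 of \cite{francini}) already gives non-vanishing when $\gamma$ is close to a constant. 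For the general case I would invoke the similarity principle for pseudoanalytic functions applied to this Beltrami-type first-order system, in the spirit of the conductivity analysis in \cite{brownuhlmann}: a solution equal to $1$ plus an $L^p$ function, if not identically $1$, factors as $e^{s}$ times a holomorphic function whose behaviour at infinity is pinned down, forcing it to be zero-free on all of $\R^2$. Combining this non-vanishing statement with the algebraic identities completes the proof.
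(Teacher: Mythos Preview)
Your argument is correct and follows essentially the same route as the paper: both proofs derive the identities $\dbarz M_+=Q_{12}M_-$ and $\dez M_-=Q_{21}M_+$ directly from the first-order system \eqref{M_eqs}, the only cosmetic difference being that the paper carries out the computation for general $k$ (picking up an extra $i(k-\bar k)e(z,-k)M_{12}$ term) and then sets $k=0$, whereas you specialize to $k=0$ first and add the resulting scalar equations. Your discussion of the non-vanishing of $M_\pm(\cdot,0)$ goes beyond what the paper's proof provides---the paper simply records the two algebraic identities and stops, without justifying the division---so that part of your proposal is extra rigor rather than a deviation in approach.
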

\begin{proof}
We follow an idea similar to that in \cite{barceloRuiz} and define
\begin{eqnarray}
M_+(Q_\gamma,z,k)&=& M_{11}(Q_\gamma,z,k)+e(z,-k)M_{12}(Q_\gamma,z,k)\label{eq-M+}\\
M_-(Q_\gamma,z,k)&=& M_{22}(Q_\gamma,z,k)+e(z,k)M_{21}(Q_\gamma,z,k)\label{eq-M-}.
\end{eqnarray}
Note that $M_+$ and $M_-$ are only dependent on the $Q_\gamma$ matrix, not $-Q_\gamma^T$ as is required in \cite{barceloRuiz}. Therefore,
\begin{eqnarray*}
\dbarz M_+(Q_\gamma,z,k) &=& Q_{12}(z)e(z,-k)\left[M_-(Q_\gamma,z,k) \right. \\
&=&\qquad + \left. i (k-\bar{k})M_{12}(Q_\gamma,z,k)\right]  \\
\dez M_-(Q_\gamma,z,k) &=& Q_{21}(z)e(z,k)M_+(Q_\gamma,z,k),
\end{eqnarray*}
so that
\begin{eqnarray*}
\dbarz M_+(Q_\gamma,z,0) &=& Q_{12}(z)M_-(Q_\gamma,z,0)\\
\dez M_-(Q_\gamma,z,0) &=& Q_{21}(z)M_+(Q_\gamma,z,0).
\end{eqnarray*}
\end{proof}

One can then reconstruct the $\log$ of the admittivity $\gamma$ from either $Q_{12}$ or $Q_{21}$ by inverting the $\dez$ or $\dbarz$ operator respectively, and exponentiate to recover $\gamma$ explicitly
\begin{equation}
\log(\gamma(z))=-\frac{2}{\pi}\int_{\mathbb{C}}\frac{Q_{12}(\zeta)}{\bar{z}-\bar{\zeta}}\;d\mu(\zeta) =-\frac{2}{\pi}\int_{\mathbb{C}}\frac{Q_{21}(\zeta)}{z-\zeta}\;d\mu(\zeta).  \label{eq-gam-Q21}
\end{equation}

\subsection{The steps of the algorithm}
We now have all the necessary steps for a direct reconstruction algorithm:
\begin{enumerate}
\item Compute the exponentially growing solutions $u_1(z,k)$ and $u_2(z,k)$ to the admittivity equation from the boundary integral formulas \eqref{bie2} and \eqref{bie2a}
\begin{eqnarray}
u_1(z,k)\vert_{\DOm} &=&  \left.\frac{e^{ik{z}}}{ik}\right\vert_{\DOm} - \int_{\DOm}G_k(z-\zeta)(\Lambda_\gamma - \Lambda_1) u_1(\zeta,k) dS(\zeta)\nonumber\\
u_2(z,k)\vert_{\DOm} &=& \left. \frac{e^{-ik\bar{z}}}{-ik}\right\vert_{\DOm} - \int_{\DOm}G_k(-\bar{z}+\overline{\zeta})(\Lambda_\gamma - \Lambda_1) u_2(\zeta,k) dS(\zeta). \nonumber
\end{eqnarray}
\item Compute the off diagonal entries of the {\sc CGO} solution $\Psi(z,k)$ for $z\in\DOm$ from the boundary integral formulas \eqref{eq-Psi-12-bndry-USE-THIS} and \eqref{eq-Psi-21-bndry-USE-THIS}
\begin{eqnarray*}
\Psi_{12}(z,k) &=& \int_{\DOm}\frac{e^{i \bar{k}(z-\zeta)}}{4\pi(z-\zeta)}\left[\Lambda_\gamma-\Lambda_1\right]u_2(\zeta,k)\;dS(\zeta) \label{bie_psi_12}\\
\Psi_{21}(z,k) &=& \int_{\DOm}\overline{\left[\frac{e^{i k(z-\zeta)}}{4\pi(z-\zeta)}\right]}\left[\Lambda_\gamma-\Lambda_1\right]u_1(\zeta,k)\;dS(\zeta). \label{bie_psi_21}
\end{eqnarray*}
\item  Compute the off-diagonal entries of the scattering matrix $S_{\gamma}(k)$ from \eqref{Parts_BIE_for_S12} and \eqref{Parts_BIE_for_S21}
\begin{eqnarray*}
S_{12}(k) &=& \frac{i}{2\pi}\int_{\DOm}e^{-i\bar{k}z}\Psi_{12}(z,k) (\nu_1+i\nu_2)dS(z) \label{bie_S_12}\\
S_{21}(k) &=& -\frac{i}{2\pi}\int_{\DOm}e^{i\bar{k}\bar{z}}\Psi_{21}(z,k) (\nu_1-i\nu_2)dS(z). \label{bie_S_21}
\end{eqnarray*}
\item Solve the $\dbar_k$ equation \eqref{eq-dbar-k} for the matrix $M(z,k)$
\begin{equation*}
\dbar_k M(z,k)=M(z,\bar{k})\left(\begin{array}{cc}
e(z,\bar{k}) & 0\\
0 & e(z,-k)\\
\end{array}\right) S_\gamma(k).
\end{equation*}
\item Reconstruct $Q_{\gamma}$ from Theorem~\ref{thm-M-to-Q-Sarah} and use \eqref{eq-gam-Q21} to compute $\gamma$.
\end{enumerate}

\section{Numerical Implementation}\label{sec:numerics}
In this section, we describe the implementation of the algorithm.  Greater detail of the numerical methods and validations of the computations for  admittivity distributions with twice differentiable real and imaginary parts can be found in \cite{sarah_thesis}, where the solution to the forward problem \eqref{Dbar_z_for_M} is computed and used to validate formulas \eqref{eq-Psi-12-bndry-USE-THIS} an \eqref{eq-Psi-21-bndry-USE-THIS}, as well as computations of the scattering transform.  In this work, we consider examples with discontinuities at the organ boundaries.

\subsection{Computation of the DN map}\label{sec-implement}
An approximation to the DN map was computed by simulating voltage data by the finite element method (FEM),  and then computing a matrix approximation to the map by computing the inner product of the applied currents with the voltages.   This approximation to the DN map has been discussed, for example, in \cite{TMIdata,deangelo,dbar_regul}.  It can be formed analogously in the complex case.

Gaussian white noise was added independently to the real and imaginary parts of the simulated voltages for each current pattern by adding a random vector of amplitude $\eta>0$ multiplied by the maximum voltage value for that current pattern and real or imaginary component to the computed voltages.  We consider noise levels $\eta=0$ and $\eta = 0.0001$, which corresponds to $0.01\%$ noise, the published level of the ACT 3 system \cite{edic}, which applies the trigonometric current patterns used in the simulations here.

\subsection{Computation of the {\sc CGO} solutions and $S_\gamma(k)$}
The {\sc CGO} solutions on the boundary of $\Om$ were computed for each $k$ in a grid $[-K,K]^2$ in the complex plane.  The choice of $K$, which serves as a cut-off frequency, was determined by the behavior of the scattering transforms $S_{12}$ and $S_{21}$.  
As in \cite{dbar_regul} for the D-bar algorithm for conductivity reconstructions, the cutoff frequency $K$ has a regularizing effect, and was chosen here empirically to balance smoothing and numerical error.  We do not address the selection of $K$ by more sophisticated means in this work.

\subsubsection{Computation of $u_1$ and $u_2$}
A boundary integral equation of the form \eqref{bie2} was solved in \cite{deangelo} and \cite{dbar_regul}.  In this work, as in \cite{deangelo}, we employ an approximation to the Faddeev's Green's function $G_k$ that allows for very rapid computation of $u_1$ and $u_2$ from \eqref{bie2} and \eqref{bie2a} respectively.  Namely, $G_k$ is approximated by  the fundamental solution for the Laplacian
\[G_0(z) = \frac{1}{2\pi}\log |z|.\]
Denoting the solutions to \eqref{bie2}, \eqref{bie2a} by $u_1^0$ and $u_2^0$, respectively,  the convolution integrals
\begin{eqnarray}
u_1^0(z,k)\vert_{\DOm} &=&  \left.\frac{e^{ik{z}}}{ik}\right\vert_{\DOm} - \int_{\DOm}G_0(z-\zeta)(\Lambda_\gamma - \Lambda_1) u_1^0(\zeta,k) dS(\zeta)\nonumber\\
u_2^0(z,k)\vert_{\DOm} &=& \left. \frac{e^{-ik\bar{z}}}{-ik}\right\vert_{\DOm} - \int_{\DOm}G_0(-\bar{z}+\bar{\zeta})(\Lambda_\gamma - \Lambda_1) u_2^0(\zeta,k) dS(\zeta)\nonumber
\end{eqnarray}
were computed for $z=z_\ell$, the center of the $\ell$th electrode,  via Simpson's rule, and $G_0$ was set to 0 when $\zeta=z_\ell$.  Note that by the definition of $G_0$, $G_0(z-\zeta)=G_0(-\bar{z}+\bar{\zeta})$.

\subsubsection{Computation of $\Psi_{12}$ and $\Psi_{21}$}
The boundary integral formulas \eqref{eq-Psi-12-bndry-USE-THIS} and \eqref{eq-Psi-21-bndry-USE-THIS} for $\Psi_{12}$ and $\Psi_{21}$,  respectively, require knowledge of $\left[\Lambda_\gamma-\Lambda_1\right]u_j(\zeta,k)$ for $j=1,2$, with $\zeta\in\DOm$, and $k\in\mathbb{C}\setminus\{0\}$.  These values are already computed during the evaluation of $u_1$ and $u_2$ via \eqref{bie2} and \eqref{bie2a}.  Therefore, we merely recall those values and approximate the boundary integral using a finite sum.  One should note that $G_0(z-\zeta)$, $\dbarz G_k(z-\zeta)$, and $\dez G_k(-\bar{z}+\bar{\zeta})$ are all undefined for $z=\zeta$.  We removed these points in the computation by setting their values to zero.

\subsubsection{Computation of the scattering transform}\label{sec-comp-scat}
The off-diagonal entries of the scattering transform matrix, namely $S_{12}(k)$ and $S_{21}(k)$,  were computed inside the square $[-K,K]^2$ (with $k=0$ not included since the formulas for the {\sc CGO} solutions do not hold for $k=0$).
 We compute $S_{12}(k)$ and $S_{21}(k)$ using a finite sum approximation to \eqref{Parts_BIE_for_S12} and \eqref{Parts_BIE_for_S21}:
\begin{eqnarray*}
S_{12}(k) &\approx& \frac{i}{L}\sum_{\ell=1}^L e^{-i\bar{k}z_\ell}\Psi_{12}(z_\ell,k) z_\ell \\
S_{21}(k) &\approx& -\frac{i}{L}\sum_{\ell=1}^L e^{i\bar{k}\overline{z_\ell}}\Psi_{21}(z_\ell,k) \overline{z_\ell},
\end{eqnarray*}
where $z_l$ denotes the coordinate of the $\ell^\text{th}$ equally spaced electrode around $\DOm$ (in this case the unit circle).

\subsection{Solution of the system of D-bar equations}\label{sec-solving-dbarK}
The two systems of $\dbar_k$ equations \eqref{eq-dbark-system-1} and \eqref{eq-dbark-system-2} can be written as the convolutions
\begin{equation}\label{eq-dbark-system-1-convolution}
\left\{\begin{array}{rc}
 1=& M_{11}(z,k)-\frac{1}{\pi k}\ast\left(M_{12}(z,\bar{k})e(z,-k)S_{21}(k)\right)\\
0=&M_{12}(z,k) -\frac{1}{\pi k}\ast\left(M_{11}(z,\bar{k})e(z,\bar{k})S_{12}(k)\right)\\
\end{array}\right.,
\end{equation}
and
\begin{equation}\label{eq-dbark-system-2-convolution}
\left\{\begin{array}{rc}
1 =& M_{22}(z,k)-\frac{1}{\pi k}\ast\left(M_{21}(z,\bar{k})e(z,\bar{k})S_{12}(k)\right)\\
0=&M_{21}(z,k) -\frac{1}{\pi k}\ast\left(M_{22}(z,\bar{k})e(z,-k)S_{21}(k)\right)\\
\end{array}\right..
\end{equation}
A numerical solver for equations of the form
\[\dbar_k v(k)=T(k)\overline{v(k)}.\]
was developed in \cite{FIST}  for the inverse conductivity problem.  The solver is based on the fast method by Vainikko~\cite{vainikko} that uses FFT's for solving integral equations with weakly singular kernels.

In this work, we must solve the systems of equations  \eqref{eq-dbark-system-1-convolution} and \eqref{eq-dbark-system-2-convolution} rather than a single equation.  Furthermore, the unknowns $M(z,k)$ are not conjugated, but instead the argument $k$ is conjugated.  To address this, we interpolated the scattering data $S_\gamma$, computed above in Section~\ref{sec-comp-scat}, to a new $k$-grid that includes
the origin $k=0$ at the center and has an odd number of grid points in both the horizonal and vertical directions. We solve the systems \eqref{eq-dbark-system-1-convolution} and \eqref{eq-dbark-system-2-convolution} on this new $k$-grid using appropriate flip operations to ensure that we access the correct entries in the matrix corresponding to $M(z,\bar{k})$.

To perform the convolution we used Fourier transforms as follows:
\begin{eqnarray*}
\frac{1}{\pi k} &\ast& \left(M_{12}(z,\bar{k})e(z,-k)S_{21}(k)\right)  \\
&= & h_\kappa^2\;\text{IFFT}\left(\text{FFT}\left(\frac{1}{\pi k}\right)\cdot\text{FFT}\left(M_{12}(z,\bar{k})e(z,-k)S_{21}(k)\right)\right),
\end{eqnarray*}
and similarly
\begin{eqnarray*}
\frac{1}{\pi k}&\ast& \left(M_{11}(z,\bar{k})e(z,\bar{k})S_{12}(k)\right) \\
&=& h_\kappa^2\;\text{IFFT}\left(\text{FFT}\left(\frac{1}{\pi k}\right)\cdot\text{FFT}\left(M_{11}(z,\bar{k})e(z,\bar{k})S_{12}(k)\right)\right),
\end{eqnarray*}
where $h_\kappa$ is the step size of the uniform $k$-grid of size 129 $\times$ 129, and $\cdot$ denotes componentwise multiplication.  We used GMRES to solve the resulting linear systems for each value of $z$ in a grid of 128 equally spaced points between [-1.1,1.1] in both the $x$ and $y$ directions and computed $M(z,k)$ for all $|z|\leq1.1$.  The step size in $z$ was $h_z\approx0.0173$.

\subsection{Computation of the admittivity}\label{sec-comp-admittivity}
The admittivity is computed by solving first for $Q_{21}$ from \eqref{eq-M-to-Q21} (note that equivalently one could use  $Q_{12}$ from \eqref{eq-M-to-Q12}), and then solving \eqref{eq-gam-Q21} for
$\log (\gamma)$ in the Fourier domain using FFT's. The functions $M_+$ and $M_-$ in equations
\eqref{eq-M+-thm} and \eqref{eq-M--thm} were evaluated using the entries of $M(z,0)$ recovered when solving the $\dbar_k$ equation (see Section~\ref{sec-solving-dbarK} above).
We used centered finite differences (with a step size of $h_z\approx0.0173$) to evaluate the $\dbarz$ and $\dez$ derivatives of $M_+$ and $M_-$ respectively.  We then performed componentwise division to compute $Q_{12}$ and $Q_{21}$ for $|z|\leq1.1$.  Finally, the computed $\log(\gamma)$ was exponentiated to recover $\gamma$ inside the unit disk.

\section{Numerical Results}\label{sec:results}
We consider several test problems simulating a simplified cross-section of a human torso.  In each example, the admittivity is given by $\gamma = \sigma + i \epsilon$.  That is, the imaginary component includes the temporal angular frequency $\omega$.  Since this is a known value, there is no loss of generality in representing $\gamma$ this way in the simulations.
The complete electrode model (CEM), originally described in \cite{Cheng}, was implemented in the FEM  in order to solve the forward problem. The CEM takes into account both the shunting effect of the electrodes and the contact impedances between the electrodes and tissue.  In our computations, $\Om$ was chosen to be a disk of radius $0.15 m$,  and  the FEM computations were performed on a mesh with 4538 triangular elements and 32 equispaced electrodes $0.029m \times 0.024m$ placed on the boundary. The effective contact impedance was chosen to be $z=0.0057 \Omega m^2$ on all electrodes in our simulations. The current amplitude was chosen to be $C =2 mA $, and the applied current patterns are the trigonometric patterns
\begin{equation}
I^j_\ell=
\begin{cases}
C\cos\left(j\theta_\ell\right), & 1\leq \ell\leq L, \;\;\;1\leq j\leq \frac{L}{2}\\
C\sin\left(\left(\frac{L}{2}-j\right)\theta_\ell\right),& 1\leq \ell\leq L,\;\;\;\frac{L}{2}+1\leq j\leq L-1,
\end{cases}
\end{equation}
where $\theta_\ell=\frac{2\pi\ell}{L}$, $\left|e_\ell\right|$ is the area of the $\ell^{\text{th}}$ electrode, $I_\ell$ is the current on the $\ell^{\text{th}}$ electrode, and $L$ denotes the total number of electrodes.
As in \cite{TMIdata,deangelo}, the currents were normalized to have $\ell^2$-norm of $1$, and the voltages were normalized accordingly.  Also, the DN map was scaled to represent data collected on the unit disk using the relation $\Lambda_{\gamma,1} = r\Lambda_{\gamma,r}$, where the second subscript represents the radius of the disk.

 Where indicated, we added 0.01\% Gaussian relative noise  to the simulated voltages as follows.  Denote the (complex-valued) vector of computed voltage for the $j$-th current pattern by $V^j$,  let $\eta = 0.0001$ denote the noise level, and $N$ a Gaussian random vector (generated by the {\tt randn} commmand in MATLAB) that is unique for each use of the notation $N$.  Denoting the noisy data by $\tilde{V}^j$ we then have $\tilde{V}^j=\mbox{Re}(\tilde{V}^j) +i\;\mbox{Im}(\tilde{V}^j) $ where
\begin{eqnarray*}
\mbox{Re}(\tilde{V}^j) &=& \mbox{Re}(V^j) + \eta \max |\mbox{Re}(V^j)| N \\
\mbox{Im}(\tilde{V}^j) &=& \mbox{Im}(V^j) + \eta \max |\mbox{Im}(V^j)| N.
\end{eqnarray*}
We solve the boundary integral equations \eqref{bie2} and \eqref{bie2a} for the traces of the {\sc CGO} solutions $u_1$ and $u_2$ for $k\in[-K,K]^2$, with $K$ varying for each test problem in this work.   
The solution $M(z,k)$, to the $\dbar_k$ equation \eqref{eq-dbar-k}, is computed in parallel by the method described in Section \ref{sec-solving-dbarK}.  The low-pass filtering by taking $k\in[-K,K]^2$ results in smooth functions $M_{jp}$, $j,p=1,2$, which are differentiated by centered finite differences to recover $Q_{21}$, as described in Section \ref{sec-comp-admittivity}.
The admittivity $\gamma$ was then computed by \eqref{eq-gam-Q21}.

Define the dynamic range of the conductivity, and likewise the  permittivity, by
\begin{equation} \label{dyn_range}
\frac{\max \sigma^{(K)} - \min \sigma^{(K)}}{\max \sigma  - \min \sigma } \cdot 100\%,
\end{equation}
where the maximum and minimum values are taken on the computational grid for the reconstruction and $\sigma^{(K)}$ denotes the reconstructed conductivity $\sigma$ that was computed using a scattering transform computed on the truncated $k$ grid.

\subsection{Example 1}

The first test problem is an idealized cross-section of a chest with a background admittivity of 1+0i.  We do not include units or frequency in these examples, since our purpose is to demonstrate that the equations in this paper lead to a feasible reconstruction algorithm for complex admittivities.  Reconstructions from more realistic admittivity distributions or experimental data are the topic of future work.  Figure 1 shows the values of the admittivity in the simulated heart and lungs.  Noise-free reconstructions with the scattering transform computed on a $128\times128$ grid for $k\in[-5.5,5.5]^2$  are found in Figure 2.  The reconstruction has a maximum conductivity and permittivity value of $1.1452+0.1802i$, occurring in the heart region and a minimum of $0.8286-0.0247i$, occurring in the lung region, resulting in a dynamic range of $79\%$ for the conductivity and $60\%$ for the permittivity when the negative permittivity value is set to 0.  Although this decreases the dynamic range, we set the permittivity to 0 when it takes on a negative value in any pixel, since physically the permittivity cannot be less than 0.  The reconstruction has the attributes of  good spatial resolution and good uniformity in the reconstruction of the background and its value.


\begin{figure}[h!]
\centering
\hspace{0.5em}
\includegraphics[width=3.5in]{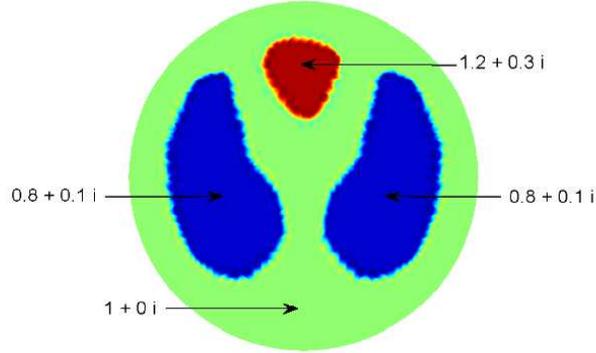}
\label{fig:Test1}\caption{The test problem in Example 1.}
\end{figure}

\begin{figure}[h!]
\centering
\hspace{0.5em}
{\includegraphics[width=4.8in]{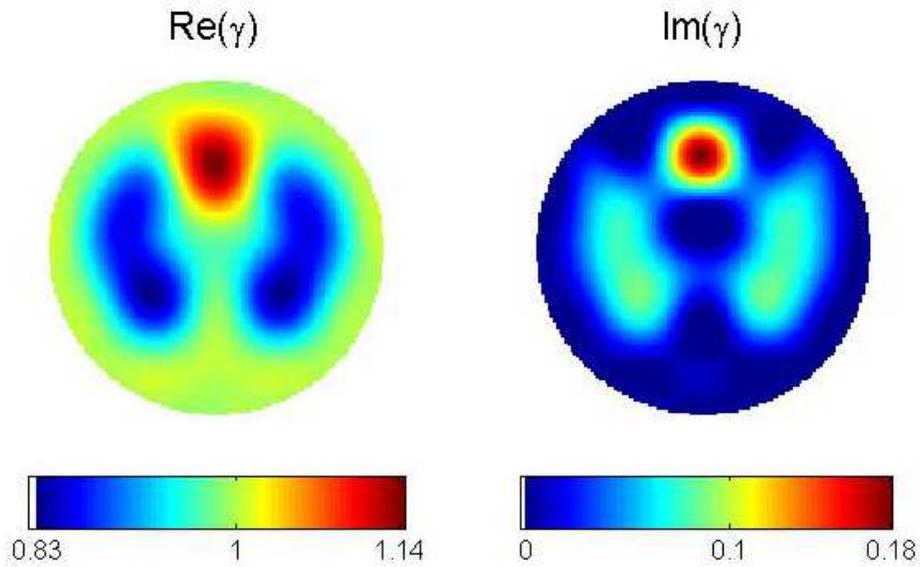}
}
\label{fig:Recons1}\caption{Reconstruction from noise-free data for Example 1 with the real part of $\gamma$ (conductivity) on the left, and the imaginary part (permittivity) on the right.  The cut-off frequency was $K=5.5$.  The dynamic range is 79\% for the conductivity, and 60\% for the permittivity.}
\end{figure}

\subsection{Example 2}

This second example was chosen with conductivity values the same as in Example 1, but with permittivity values in which the ``lungs'' match the permittivity of the background.  This is motivated by the fact that at some frequencies, physiological features may match that of the surrounding tissue in the conductivity or permittivity component.  This example, purely for illustration, mimics that phenomenon.  The admittivity values can be found in Figure 3.   Noise-free reconstructions with the scattering transform computed on a $128\times128$ grid for $k\in[-5.5,5.5]^2$ are found in Figure 4.  The maximum value of the conductivity and permittivity occur in the heart region, $1.1429+0.1828 i$, and the minimum value of the conductivity and permittivity is $0.8271-0.0204 i$.  In this example, the dynamic range is $79\%$ for the conductivity and $61\%$ for the permittivity when the negative permittivity value is set to 0.  Again the spatial resolution is quite good, and the background is quite homogeneous, although some small artifacts are present in both the real and imaginary parts.


\begin{figure}[h!]
\centering
\hspace{0.5em}
\includegraphics[width=3.5in]{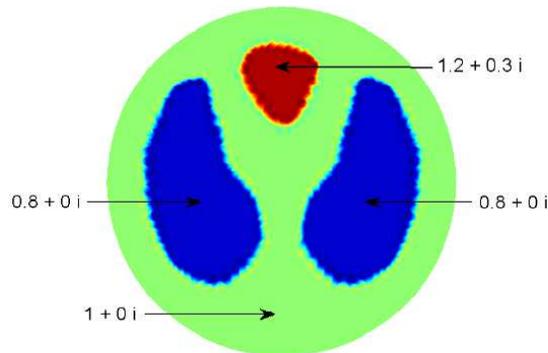}
\label{fig:Test2}\caption{The test problem in Example 2.   Notice that in this case, the permittivity of the lungs matches the permittivity of the background, and so only the heart should be visible in the imaginary component of the reconstruction.}
\end{figure}

\begin{figure}[h!]
\centering
\hspace{0.5em}
{\includegraphics[width=4.8in]{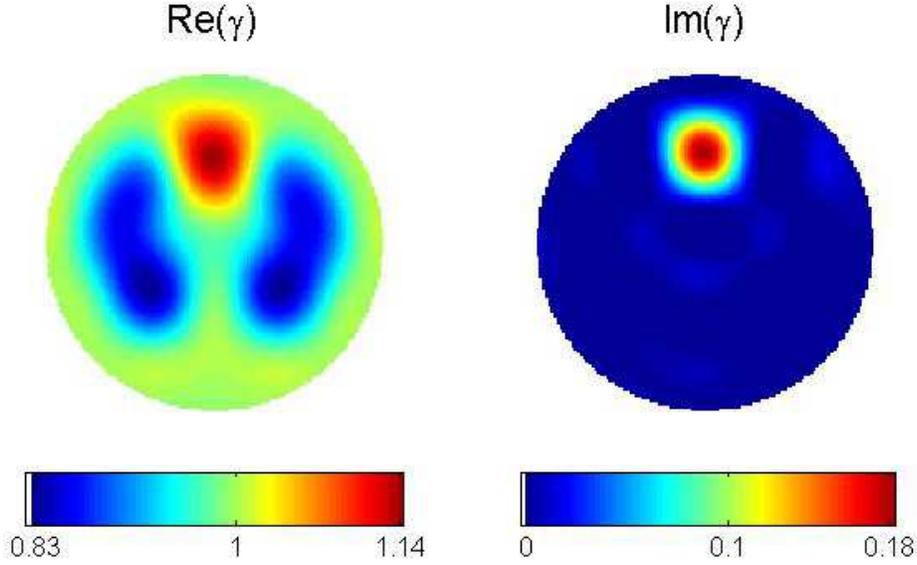}
}
\label{fig:Recons2}\caption{Reconstruction from noise-free data for Example 2 with the real part of $\gamma$ (conductivity) on the left, and the imaginary part (permittivity) on the right.  The cut-off frequency was $K=5.5$.  The dynamic range is 79\% for the conductivity, and 61\% for the permittivity.}
\end{figure}

\subsection{Example 3}

Example 3 is an admittivity distribution of slightly higher contrast, and a non-unitary background admittivity of $\gamma_0=0.8+0.3 i$.  See Figure 5 for a plot of the phantom with admittivity values for the regions.
Due to the non-unitary background, the problem was scaled,  as was done, for example, in \cite{deangelo, TMIdata},  by defining a scaled admittivity $\tilde{\gamma}=\gamma/\gamma_0$ to have a unitary value in the neighborhood of the boundary and scaling the DN map by defining $\Lambda_{\tilde{\gamma}}=\gamma_0 \Lambda_{\gamma}$, solving the scaled problem, and rescaling the reconstructed admittivity.
The scattering data for the noise-free reconstruction was computed on a $128\times128$ grid for $k\in[-5.2,5.2]$.   Noisy data was computed as described above in the beginning of this section, and the scattering data was also computed on a $128\times128$ grid for $|k|\leq 5.5$.  The reconstructions are found in Figure 6.  The maximum and minimum values are given in Table 1.  In this example, for the noise-free reconstruction, the dynamic range is $71\%$ for the conductivity and $75\%$ for the permittivity.
Again the spatial resolution is quite good.  There is some degradation in the image and the reconstructed values in the presence of noise. We chose this noise level to be comparable to that of the 32 electrode ACT3 system at RPI \cite{Cook1991}. A thorough study of the effects of noise and stability of the algorithm with respect to perturbations in the data is beyond the scope of this paper.  The scattering transform began to blow up for noisy data, requiring a truncation of the admissible scattering data to a circle of radius 5.5, resulting in a dynamic range of $62\%$  for the conductivity and $68\%$ for the permittivity.  A thorough study of the effects of the choice of $K$ and its method of selection is not included in this paper.

\begin{table}[ht] \label{table-admitt-3}
\centering 
\begin{tabular}{|c|c|c|c|}
  \hline
  & Admitivity of & Reconstruction from & Reconstruction from \\
  & test problem & noise-free data &noisy data \\
  \hline
  heart & 1.2 + 0.6 i  & 1.0246 + 0.5014 i (max) & 0.9740 + 0.4679 i (max) \\
  lungs & 0.5 + 0.1i  & 0.5262 + 0.1258 i (min) & 0.5390 + 0.1281 i (min)\\
  \hline
\end{tabular}
\caption{Maximum and minimum values in Example 3 with the non-unitary background were found in the appropriate organ region.  The table indicates these values of the admittivity in the appropriate region. }
\end{table}

\begin{figure}[h!]
\centering
\hspace{0.5em}
\includegraphics[width=3.5in]{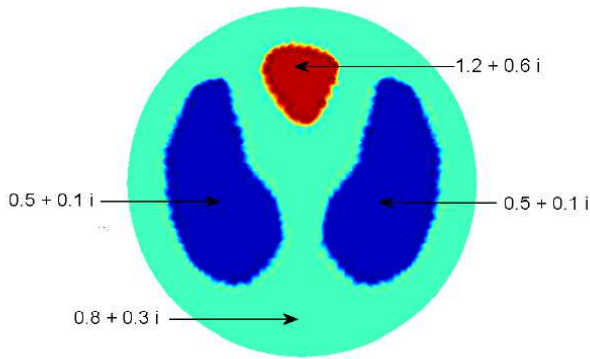}
\label{fig:Test3}\caption{The test problem in Example 3.  In this case, the background admittivity is $0.8+0.3i$, rather than $1+0i$ as in Examples 1 and 2.}
\end{figure}

\begin{figure}[h!]
\centering
\hspace{0.5em}
\includegraphics[width=4.8in]{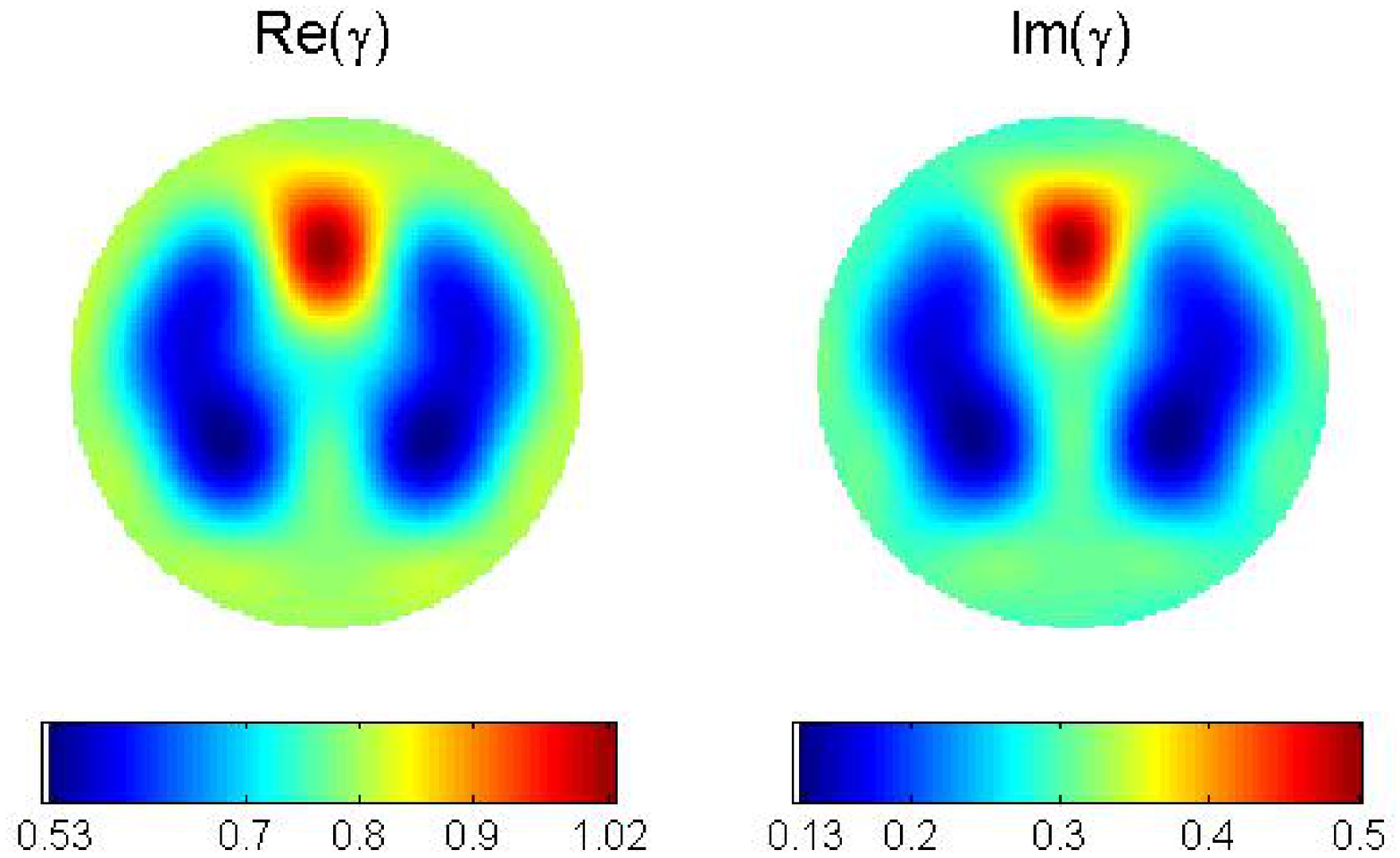} \\
\vspace{2em}
\includegraphics[width=4.8in]{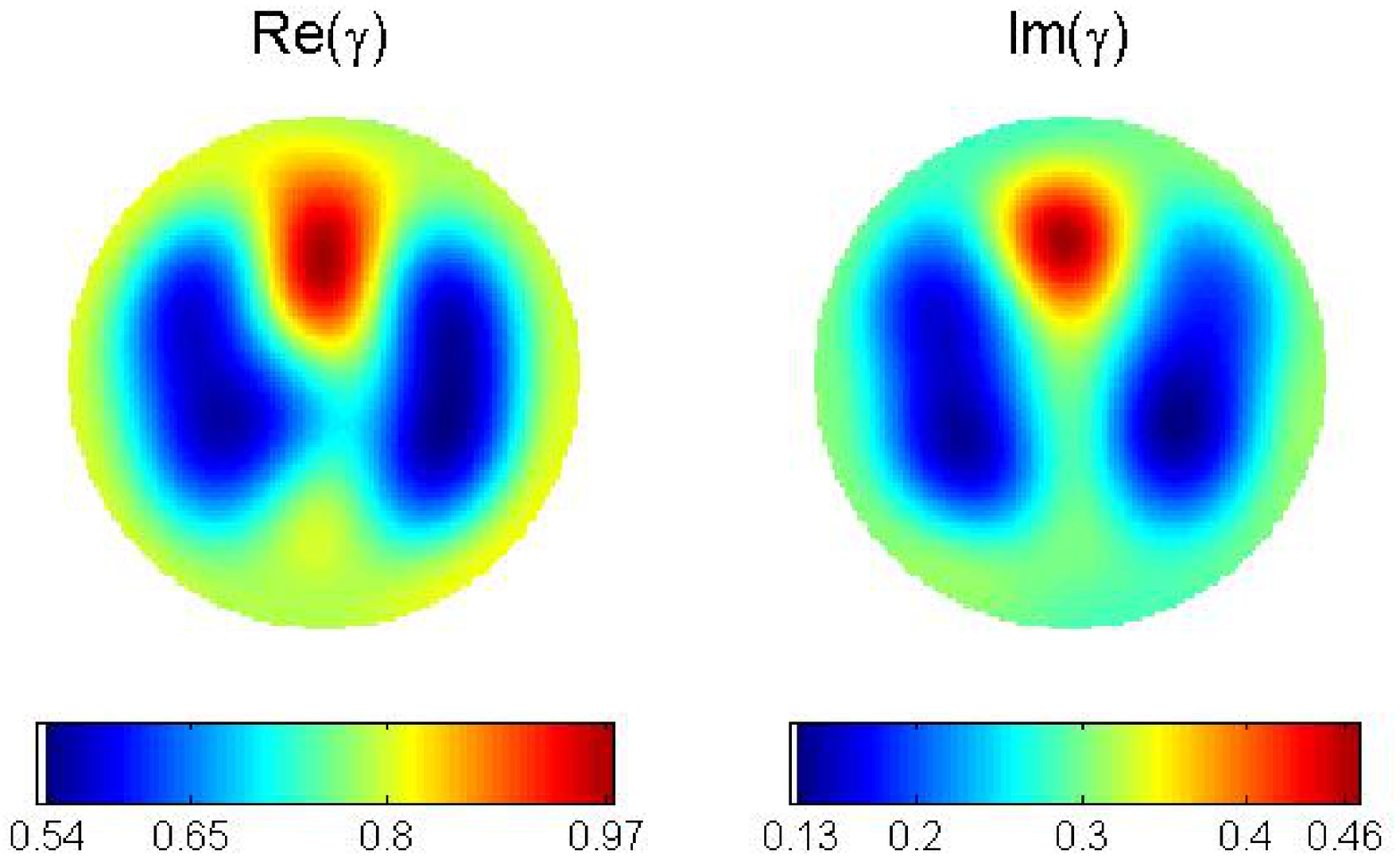}
\caption{\label{fig:Recons2}Top row:  Reconstruction from noise-free data for Example 3.  The cut-off frequency was $K=5.2$.  The dynamic range is 71\% for the conductivity, and 75\% for the permittivity.  Bottom row: Reconstruction from data with $0.01\%$ added noise.  The cut-off frequency was $|k|\leq5.5$.  The dynamic range is 62\% for the conductivity, and 68\% for the permittivity.}
\end{figure}


\section{Conclusions}
A new direct method is presented for the reconstruction of a complex conductivity.  This method has the attributes of being fully nonlinear, parallelizable, and the direct reconstruction does not require a high frequency limit.  It was demonstrated on numerically simulated data representing a cross-section of a human chest with discontinuous organ boundaries that the method yields reconstructions with good spatial resolution and dynamic range on noise-free and noisy data.
This was the first implementation of such a method, and although efforts were made to realistically simulate experimental data by including discontinuous organ boundaries, data on a finite number of electrodes, and simulated contact impedance, actual experimental data will surely prove more challenging.
 While this study with simulated data gives very promising results, more advanced studies of stability and robustness may be necessary to deal with the more difficult problem of reconstructions from experimental data.

\section{Acknowledgments} The project described was supported by Award Number R21EB009508 from the National Institute Of Biomedical Imaging And Bioengineering.  The content is solely the responsibility of the authors and does not necessarily represent the official view of the National Institute Of Biomedical Imaging And Bioengineering or the National Institutes of Health. C.N.L. Herrera was supported by FAPESP under the process number $2008/08739-7$.

\afterpage{\clearpage}

\newpage

\end{document}